\newcounter{thmcounter}
\numberwithin{equation}{section}
\numberwithin{thmcounter}{section}
\newtheorem{theorem}[thmcounter]{Theorem}
\newtheorem{proposition}[thmcounter]{Proposition}
\newtheorem{lemma}[thmcounter]{Lemma}
\newtheorem{conjecture}[thmcounter]{Conjecture}
\newtheorem{problem}[thmcounter]{Problem}
\theoremstyle{definition}
\newtheorem{remark}[thmcounter]{Remark}
\newtheoremstyle{claim}{9pt}{3pt}{}{\parindent}{\bf}{.}{1em}{}
\theoremstyle{claim}
\newenvironment{namelist}[1]{%
\begin{list}{}
{
\settowidth{\labelwidth}{#1}%
\setlength{\labelsep}{0.3em}%
\setlength{\leftmargin}{\labelwidth}%
\addtolength{\leftmargin}{\labelsep}}}{%
\end{list}}
\newcommand{\nZ}{\mathbb{Z}}                     
\newcommand{\nP}{\mathbb{P}}                     
\newcommand{\sE}{\mathscr{E}}
\newcommand{\sO}{\mathscr{O}}                    
\newcommand{\mf}[1]{\mathfrak{#1}}
\DeclareMathOperator{\ev}{ev}					 
\DeclareMathOperator{\Tor}{Tor}                  
\DeclareMathOperator{\rank}{rank}                
\newcounter{rkcounter}             
\begin{document}

\title[Asymptotic vanishing of syzygies of algebraic varieties]{Asymptotic vanishing of syzygies of algebraic varieties}

\author{Jinhyung Park}
\address{Department of Mathematical Sciences, KAIST, 291 Daehak-ro, Yuseong-gu, Daejeon 34141, Republic of Korea}
\email{parkjh13@kaist.ac.kr}

\date{\today}
 
\thanks{J. Park was partially supported by the National Research Foundation (NRF) funded by the
Korea government (MSIT) (NRF-2021R1C1C1005479).}

\begin{abstract} 
The purpose of this paper is to prove Ein--Lazarsfeld's conjecture on asymptotic vanishing of syzygies of algebraic varieties. This result, together with Ein--Lazarsfeld's asymptotic nonvanishing theorem, describes the overall picture of asymptotic behaviors of the minimal free resolutions of the graded section rings of line bundles on a projective variety as the positivity of the line bundles grows. Previously, Raicu reduced the problem to the case of products of three projective spaces, and we resolve this case here.
\end{abstract}

\maketitle


\section{Introduction}

Throughout the paper, we work over an algebraically closed field $\Bbbk$ of arbitrary characteristic.
Let $X$ be a projective variety of dimension $n$, and $L$ be a very ample line bundle on $X$ which 
gives rise to an embedding
$$
X \subseteq \nP H^0(X, L) = \nP^{r},
$$
where $r=h^0(X, L)-1$.
Denote by $S$ the homogeneous coordinate ring of $\nP^{r}$. Fix a coherent sheaf $B$ on $X$, and let
$$
R=R(X, B; L):=\bigoplus_{m \in \mathbb{Z}} H^0(X, B \otimes L^m)
$$
be the graded section $S$-module of $B$ associated to $L$. By the Hilbert syzygy theorem, $R$ has a minimal free resolution
$$
 \xymatrix{
 0 & R \ar[l]& E_0 \ar[l]  & E_1  \ar[l] & \ar[l] \cdots &  \ar[l] E_r \ar[l]  &  \ar[l]0 
 }
$$
where
$$
E_p = \bigoplus_{q} K_{p,q}(X, B;L)  \otimes_{\Bbbk} S(-p-q).
$$
The \emph{Koszul cohomology} group $K_{p,q}(X, B;L)$ is the space of \emph{$p$-th syzygies of weight $q$}.  
When $B=\sO_X$, we set $K_{p,q}(X, L):=K_{p,q}(X, \sO_X; L)$.
After the pioneering work of Green \cite{Green1, Green2}, there has been a considerable amount of work to understand vanishing and nonvanishing of $K_{p,q}(X, B;L)$.

\medskip

We say that $L$ satisfies the \emph{property $N_k$} if $K_{0,1}(X, L)=0$ and $K_{p,q}(X, L)=0$ for $0 \leq p \leq k$ and $q \geq 2$. The property $N_0$ means that  $X \subseteq \nP^{r}$ is projectively normal, and the property $N_1$ means that the defining ideal of $X$ in $\nP^{r}$ is generated by quadratic polynomials. Thus the property $N_k$ provides a natural  framework to generalize classical results on defining equations of algebraic varieties to the results on their syzygies. Along this line, Green proved that if $X$ is a smooth projective complex curve of genus $g$ and $\deg L \geq 2g+1+k$, then $L$ satisfies the property $N_k$ (see \cite[Theorem (4.a.1)]{Green1}). Green's celebrated theorem has stimulated further work in this direction, and several analogous statements for higher dimensional algebraic varieties have been established, e.g. \cite{BCR1, BCR2, EL1, ENP,  GP, Green2, Pareschi}. On the other hand, Green--Lazarsfeld \cite{GL1, GL2} and Ottaviani--Paoletti \cite{OP} called attention to the failure of the property $N_k$. The main result of \cite{OP} asserts that
$$
K_{p,2}(\nP^2, \sO_{\nP^2}(d)) \neq 0~~\text{ for $3d-2 \leq p \leq r_d-2$},
$$
where $r_d = h^0(\nP^2, \sO_{\nP^2}(d))-1$. In particular, $\sO_{\nP^2}(d)$ does not satisfy the property $N_{3d-2}$. As $r_d \approx d^2/2$, the property $N_k$ for $\sO_{\nP^2}(d)$ describes only a small fraction of the syzygies of the $d$-th Veronese embedding of $\nP^2$. Eisenbud--Green--Hulek--Popescu observed in \cite[Proposition 3.4]{EGHP} that a similar phenomenon occurs for other smooth projective surfaces, and Ein--Lazarsfeld proved in \cite[Theorem A]{EL2} that this always happens for all smooth projective varieties.

\medskip

It is an interesting problem to describe the overall asymptotic behaviors of $K_{p,q}(X, B; L)$ as the positivity of $L$ grows. This type of question was first suggested by Green \cite[Problem 5.13]{Green1} and also considered by Ein--Lazarsfeld \cite[Problem 4.4]{EL1}. To set the stage for asymptotic syzygies of algebraic varieties, assume that $X$ is smooth and $B$ is a line bundle, and let
$$
L_d:=\sO_X(dA+P)~~\text{ for an integer $d \geq 1$,}
$$
where $A$ is an ample divisor and $P$ is an arbitrary divisor on $X$. We suppose that $d$ is sufficiently large, so in particular, $L_d$ is very ample. Put $r_d:=h^0(X, L_d)-1$. Elementary considerations of Castelnuovo--Mumford regularity show that 
$$
K_{p,q}(X, B; L_d)=0~~\text{ for $q \geq n+2$}.
$$
When $q=0$ or $n+1$, \cite[Proposition 5.1 and Corollary 5.2]{EL2} state that
$$
\begin{array}{l}
K_{p,0}(X, B; L_d) \neq 0 ~\Longleftrightarrow~0 \leq p \leq h^0(B)-1;\\
K_{p, n+1}(X, B; L_d) \neq 0 ~\Longleftrightarrow~r_d-n-h^0(X, K_X-B) + 1 \leq p \leq r_d-n.
\end{array}
$$
The main issue is then to study vanishing and nonvanishing of $K_{p,q}(X, B; L_d)$ for $1 \leq q \leq n$.
Motivated by the nonvanishing results of \cite{EGHP, OP}, Ein--Lazarsfeld established the \emph{asymptotic nonvanishing theorem} in \cite[Theorem 4.1]{EL2}: For each $1 \leq q \leq n$, there are constants $C_1, C_2 > 0$ such that if $d$ is sufficiently large, then
$$
K_{p,q}(X, B; L_d) \neq 0~~\text{ for $C_1 d^{q-1} \leq p \leq r_d - C_2 d^{n-1}$}.
$$
If furthermore $H^i(X, B) = 0 $ for $1 \leq i \leq n-1$, then
$$
K_{p,q}(X, B; L_d) \neq 0~~\text{ for $C_1 d^{q-1} \leq p \leq r_d - C_2 d^{n-q}$};
$$
in this case with $P=0$, a quick proof is provided in \cite{EEL2} with effective range of $p$. It is worth noting that Zhou adapted the arguments in \cite{EL2} to show that the asymptotic nonvanishing theorem also holds for singular projective varieties with a relaxed assumption on $B$ (see \cite[Remark 4.2]{EL2}). The influential paper \cite{EL2} opens the door to research on the asymptotic behaviors of $K_{p,q}(X, B; L_d)$ for $d$ increasing. We refer to \cite{EL4} for survey on recent progress on asymptotic syzygies of algebraic varieties.

\medskip

It is very natural to ask whether the $K_{p,q}(X, B; L_d)$ vanish for  the values of $p$ outside the range in the statement of the asymptotic nonvanishing theorem. Ein--Lazarsfeld conjectured that this is indeed the case for $p < O(d^{q-1})$ (see \cite[Conjecture 7.1]{EL2}, \cite[Conjecture 1.10]{EL4}). We confirm this \emph{asymptotic vanishing conjecture} in a more general setting. 
 
 \begin{theorem}\label{thm:asyvan}
Let $X$ be a projective variety of dimension $n$, and $B$ be a coherent sheaf on $X$. Fix an ample divisor $A$ and an arbitrary divisor $P$ on $X$, and put $L_d:=\sO_X(dA+P)$ for an integer $d \geq 1$.
For each $1 \leq q \leq n+1$, there is a constant $C>0$ depending on $X, A, B, P$ such that if $d$ is sufficiently large, then
$$
K_{p,q}(X, B; L_d) =0~~\text{ for $0 \leq p \leq Cd^{q-1}$}.
$$
\end{theorem}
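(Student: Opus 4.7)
My plan is to combine Raicu's reduction (taken as a black box) with a direct cohomological analysis on products of three projective spaces. Raicu has shown that the conjecture reduces to the case $X = \nP^a \times \nP^b \times \nP^c$ with $L_d = \sO_X(d_1, d_2, d_3)$ whose multi-degree grows linearly in $d$ and $B$ a specific coherent sheaf produced by the reduction. I first dispose of the two extreme weights. For $q = n+1$, the characterization recalled in the introduction gives $K_{p,n+1}(X, B; L_d) = 0$ for $p \le r_d - n - h^0(X, K_X - B)$, and since $r_d$ grows like the $n$-th power of $d$, this range contains $\{0, \dots, Cd^n\}$ once $d$ is large. For $q = 1$, the desired vanishing is equivalent to projective normality of $L_d$ together with ideal generation in degree two, both of which follow for $d \gg 0$ from classical effective Castelnuovo--Mumford regularity.

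For the intermediate range $2 \le q \le n$ on $X = \nP^a \times \nP^b \times \nP^c$, I would work with the kernel bundle $M_{L_d} = \ker\bigl(H^0(X, L_d) \otimes \sO_X \twoheadrightarrow L_d\bigr)$ and the identification
\[
K_{p,q}(X, B; L_d) \cong H^1\bigl(X, \Lambda^{p+1} M_{L_d} \otimes B \otimes L_d^{\,q-1}\bigr),
\]
valid once $d \gg 0$ by Serre vanishing applied to the intermediate kernel-bundle sequences. On a product of three projective spaces, $M_{L_d}$ carries a natural three-step filtration whose graded pieces are pullbacks of the kernel bundles $M_i$ on each factor, each twisted by pullbacks of $\sO(d_j)$ from the other factors. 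Passing to $\Lambda^{p+1}$ and decomposing into Schur components, I obtain a filtration of $\Lambda^{p+1} M_{L_d}$ whose associated graded is a direct sum of (suitably twisted) external products $\mathbb{S}_{\lambda^{(1)}} M_1 \boxtimes \mathbb{S}_{\lambda^{(2)}} M_2 \boxtimes \mathbb{S}_{\lambda^{(3)}} M_3$, indexed by triples of partitions with $|\lambda^{(1)}| + |\lambda^{(2)}| + |\lambda^{(3)}| = p+1$. Bott's theorem applied on each factor then computes the cohomology of each graded piece, and the spectral sequence of the filtration identifies which triples can contribute to total degree $q-1$ after the twist by $B \otimes L_d^{q-1}$.

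The combinatorial heart of the argument is to show that any triple $(\lambda^{(1)}, \lambda^{(2)}, \lambda^{(3)})$ whose graded piece contributes a nonzero term must satisfy $|\lambda^{(1)}| + |\lambda^{(2)}| + |\lambda^{(3)}| \ge C d^{q-1}$ for some $C > 0$ independent of $d$. The intuition is that each unit of cohomological degree gained on a factor via Bott's theorem forces the corresponding partition to cross a Weyl-chamber wall whose distance scales linearly in the degree of $L_d$ along that factor, so distributing the $q-1$ necessary wall-crossings among the three factors yields a product-type lower bound on the total number of boxes. The main obstacle is ensuring that this lower bound is genuinely of order $d^{q-1}$ rather than $d^{q-2}$ in every way of distributing the wall-crossings and for every relevant weight of $B$; a secondary difficulty is that $B$ need not be locally free, which I would handle by replacing $B$ with a finite locally free resolution on $X$ and tracking the hypercohomology spectral sequence, verifying that the combinatorial bound survives the resolution. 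Once these uniform estimates are in place, Theorem~\ref{thm:asyvan} follows.
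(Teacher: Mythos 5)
The decisive gap is the step ``Bott's theorem applied on each factor then computes the cohomology of each graded piece.'' After Raicu's reduction the three factors carry $\sO(d_1),\sO(d_2),\sO(d_3)$ with the $d_i$ growing linearly in $d$, so the bundles $M_i$ in your filtration are the kernel bundles $M_{\sO_{\nP^{n_i}}(d_i)}$ of the evaluation maps for $\sO(d_i)$ with $d_i\geq 2$. These are not the irreducible homogeneous bundles attached to the tautological sequence of $\sO(1)$, and Bott's theorem does not compute $H^{\bullet}\bigl(\nP^{n_i}, \mathbb{S}_{\lambda}M_{\sO(d_i)}\otimes\sO(e)\bigr)$; computing even $H^{j}\bigl(\nP^{n}, \wedge^{p}M_{\sO(d)}(d)\bigr)$ is precisely the (largely open) Veronese syzygy problem that the theorem is about. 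So your ``combinatorial heart'' --- the lower bound $|\lambda^{(1)}|+|\lambda^{(2)}|+|\lambda^{(3)}|\geq Cd^{q-1}$ for contributing pieces --- is not a verification that follows from the stated tools; it is the entire content of the theorem, and no mechanism is given to prove it. Moreover, the Schur/Cauchy decomposition of exterior powers and Bott vanishing are characteristic-zero devices, while the theorem (and the paper) is over an algebraically closed field of arbitrary characteristic, where Bott's theorem fails and Cauchy gives only filtrations. The paper avoids all Schur-functor cohomology: it proves the product-of-projective-spaces case by induction on dimension using the degree-$n$ finite map $\sigma\colon Y\times\nP^{n-1}\times\nP^{1}\to Y\times\nP^{n}$ (viewing $\nP^{n}$ as the Hilbert scheme of $n$ points on $\nP^{1}$), the splitting $\sigma_{*}(\sO_{Y\times\nP^{n-1}}\boxtimes\sO_{\nP^{1}}(n-1))\cong\bigl(\cdot\bigr)^{\oplus n}$, and the key exact sequence $0\to\bigoplus\sO_{Y\times\nP^{n-1}}\boxtimes\sO_{\nP^{1}}(-n)\to\sigma^{*}M_{L_Y\boxtimes\sO_{\nP^{n}}(d)}\to M_{L_Y\boxtimes\sO_{\nP^{n-1}}(d)}\boxtimes\sO_{\nP^{1}}(d)\to 0$, which together with the wedge-power filtration and the K\"unneth formula reduces everything to $H^{1}$ of line bundles on $\nP^{1}$ and an elementary degree estimate.

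Two secondary points. Your dispatch of $q=n+1$ is misplaced: the characterization of weight-$(n+1)$ syzygies quoted in the introduction assumes $X$ smooth and $B$ a line bundle, neither of which is hypothesized in the theorem, and after passing to $Y=\nP^{n_1}\times\nP^{n_2}\times\nP^{n_3}$ the weight $q=n+1$ is in general no longer the top weight (since $\dim Y\geq n$), so it must be handled by the same argument as the intermediate weights --- the paper's Theorem \ref{thm:asyvanprodprojsp} covers all $2\leq q\leq \dim Y+1$ uniformly for exactly this reason. On the other hand, your idea of resolving the coherent sheaf $B$ and tracking the induced spectral sequence is sound and is essentially what the paper does (via the multigraded minimal free resolution on $Y$, whose terms are direct sums of line bundles, combined with Raicu's comparison theorem), so that part is not the issue; the missing ingredient is a workable replacement for the Bott/Schur computation on each factor.
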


We give some remarks on vanishing of $K_{p,q}(X, B; L_d)$ for large $p$. For simplicity, we assume that $X$ is smooth and $B$ is a vector bundle. If  $H^i(X, B)=0$ for $1 \leq i \leq n-1$, then the duality theorem (cf. \cite[Proposition 3.5]{EL2}, \cite[Theorem 2.c.6]{Green1}) says that 
$$
K_{p,q}(X, B; L_d) = K_{r_d-n-p, n+1-q}(X, B^* \otimes \omega_X; L_d)^*.
$$ 
Then our asymptotic vanishing theorem implies the following: For each $1 \leq q \leq n$, there is a constant $C>0$ such that if $d$ is sufficiently large, then
$$
K_{p,q}(X, B; L_d) =0~~\text{ for $p \geq r_d-Cd^{n-q}$}.
$$
However, if $H^{q-1}(X, B) \neq 0$ for some $2 \leq q \leq n$, then $K_{r_d-q+1, q}(X, B; L_d) \neq 0$ for large $d$ (see \cite[Remark 5.3]{EL2}). When $X$ is a smooth projective complex curve and $B$ is a line bundle, vanishing of weight-one syzygies $K_{p,1}(X, B; L_d)$ for large $p$ is determined by the duality theorem and \cite[Theorem B]{EL3} (see also \cite{Rathmann}). This implies Green--Lazarsfeld's gonality conjecture, and a higher dimensional generalization is treated in the work of Ein--Lazarsfeld--Yang \cite{ELY}.

\medskip

Shortly after the asymptotic vanishing conjecture was proposed, Raicu showed in the appendix of \cite{Raicu} that the general case of the conjecture follows from the case of products of three projective spaces. The case that $q=1$ in Theorem \ref{thm:asyvan} is trivial. To prove Theorem \ref{thm:asyvan}, it is more than enough to establish the following:

\begin{theorem}\label{thm:asyvanprodprojsp}
Let  $k \geq 1$ be an integer, $n_1, \ldots, n_k, d_1, \ldots, d_k$ be positive integers, and $b_1, \ldots, b_k$ be integers. Set
$$
\begin{array}{l}
X:=\nP^{n_1} \times \cdots \times \nP^{n_k}, ~B:=\sO_{\nP^{n_1}}(b_1) \boxtimes \cdots \boxtimes \sO_{\nP^{n_k}}(b_k),~L:=\sO_{\nP^{n_1}}(d_1) \boxtimes \cdots \boxtimes \sO_{\nP^{n_k}}(d_k),
\end{array}
$$
and $b:=\min\{b_1, \ldots, b_k\},~d:=\min\{d_1, \ldots, d_k\}$.
For each $2 \leq q \leq n_1 + \cdots +n_k+1$, if $d+b \geq 0$, then
$$
K_{p,q}(X, B;L) = 0~~\text{ for $0 \leq p \leq (1/n_1! \cdots n_k!)(d^{q-1}+bd^{q-2})$}.
$$
\end{theorem}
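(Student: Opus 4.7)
My plan is to translate Koszul cohomology into sheaf cohomology of a wedge power of the kernel bundle $M_L := \ker(H^0(L)\otimes\sO_X \to L)$ on $X$, and then to exploit both the product structure and the $\prod_i \GL_{n_i+1}$-equivariance to reduce the desired vanishing to an explicit Bott-type count on each projective factor. As a first step, the hypothesis $d+b \geq 0$ combined with Künneth yields $H^i(X, B\otimes L^j)=0$ for all $i\geq 1$ and $j\geq 1$; the standard Koszul-sheaf reduction (as in \cite{EL2}) then identifies
\[
K_{p,q}(X, B; L)\;\cong\; H^{q-1}\bigl(X,\, \wedge^{p+1} M_L \otimes B \otimes L\bigr),\qquad 2\leq q \leq n+1,
\]
where $n := n_1+\cdots+n_k$, so the problem becomes to show this cohomology vanishes in the asserted range of $p$.

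To analyse $\wedge^{p+1} M_L$ on the product $X = \prod_i \nP^{n_i}$, I would use the natural short exact sequence
\[
0 \to \pi_1^{*} M_{L_1} \otimes \pi_2^{*}(H^0(L_2)\otimes \sO_{X_2}) \to M_L \to \pi_1^{*} L_1 \otimes \pi_2^{*} M_{L_2} \to 0
\]
and iterate it across the $k$ factors to produce a filtration of $\wedge^{p+1} M_L$ whose graded pieces are external tensor products $\boxtimes_i F_i$, with each $F_i$ a Schur-functor image of $M_{L_i}$ twisted by a line bundle on $\nP^{n_i}$. The Künneth formula then forces any nonvanishing contribution to $H^{q-1}$ to come with cohomological indices $h_i$ on each factor satisfying $\sum_i h_i = q-1$ and $0 \leq h_i \leq n_i$. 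For each factor, Bott's theorem forces any partition $\lambda^{(i)}$ whose associated $F_i$ survives in $H^{h_i}\bigl(\nP^{n_i}, F_i \otimes \sO(b_i + d_i)\bigr)$ to satisfy an explicit lower bound on $|\lambda^{(i)}|$ coming from the volume of the admissible weight chamber. Summing these per-factor bounds via $p+1 = \sum_i |\lambda^{(i)}|$ and optimising over admissible tuples $\bigl((\lambda^{(i)}), (h_i)\bigr)$ should yield the desired threshold $p + 1 \geq (1/n_1!\cdots n_k!)(d^{q-1}+b\,d^{q-2}) + 1$; the leading constant $1/\prod_i n_i!$ arises from the product of the leading coefficients $h^0(L_i) \sim d_i^{n_i}/n_i!$, while the subleading $b\,d^{q-2}$ term tracks the shift from twisting by $B$.

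The main difficulty is this final combinatorial optimisation: extracting the sharp constant $1/\prod_i n_i!$ together with the correct subleading correction $b\,d^{q-2}$ from the minimum over all admissible tuples, uniformly in the number $k$ of factors and in how $b$ and $d$ are realised by the individual $b_i$ and $d_i$. A secondary technical point is in the middle step: the iterated extension structure of $M_L$ interacts nontrivially with taking the $(p+1)$-st exterior power, and one must verify that the resulting filtration has graded pieces on which factorwise Bott vanishing applies cleanly, without long-exact-sequence artefacts that pile up contributions in the wrong cohomological degree.
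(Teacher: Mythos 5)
Your opening reduction is essentially right (modulo a slip: with the paper's normalization the identification is $K_{p,q}(X,B;L)=H^{q-1}(X,\wedge^{p+q-1}M_L\otimes B\otimes L)$, or equivalently $H^1(X,\wedge^{p+1}M_L\otimes B\otimes L^{q-1})$, not $H^{q-1}$ of $\wedge^{p+1}M_L\otimes B\otimes L$), and the factorwise sequence
$0\to M_{L_1}\boxtimes(H^0(L_2)\otimes\sO)\to M_L\to L_1\boxtimes M_{L_2}\to 0$
is a legitimate tool. But the heart of your plan --- ``Bott's theorem forces an explicit lower bound on $|\lambda^{(i)}|$ for any Schur piece of $M_{L_i}$ surviving in $H^{h_i}(\nP^{n_i},F_i\otimes\sO(b_i+d_i))$'' --- is a genuine gap, not a technical point to be optimised later. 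For $d_i\geq 2$ the bundle $M_{\sO_{\nP^{n_i}}(d_i)}$ is not an irreducible homogeneous bundle (that only happens for $d_i=1$, where $M_{\sO(1)}=\Omega_{\nP^{n_i}}(1)$), so Bott's theorem does not compute the cohomology of its wedge or Schur powers; indeed, computing $H^{*}(\nP^{n},\wedge^{\bullet}M_{\sO(d)}\otimes\sO(m))$ \emph{is} the Veronese syzygy problem, which is exactly what the theorem asserts vanishing for in the case $k=1$. In that case your product decomposition is empty and the argument becomes circular: the ``per-factor Bott count'' you would sum over is the unknown quantity. There is also a characteristic issue you cannot ignore: the paper works over a field of arbitrary characteristic, where the Cauchy-type decomposition of $\wedge^{p+1}$ of a tensor product into Schur functors holds only as a filtration with divided/symmetric-power subtleties (this is why the paper is careful with $D^nV$ versus $S^nV$), so even the bookkeeping of your graded pieces needs reworking.

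The paper avoids all of this by a different geometric reduction: writing $\nP^{n}=(\nP^1)_n$ and pulling back along the finite map $\sigma\colon Y\times\nP^{n-1}\times\nP^1\to Y\times\nP^{n}$, it proves the key short exact sequence
$0\to\bigoplus\sO_{Y\times\nP^{n-1}}\boxtimes\sO_{\nP^1}(-n)\to\sigma^{*}M_{L_Y\boxtimes\sO_{\nP^{n}}(d)}\to M_{L_Y\boxtimes\sO_{\nP^{n-1}}(d)}\boxtimes\sO_{\nP^1}(d)\to 0$
(built from the divided-power sequence), and a splitting statement for the tautological bundle $\sigma_{*}(\sO\boxtimes\sO_{\nP^1}(n-1))=\sO^{\oplus n}$ so that vanishing upstairs and downstairs are equivalent. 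Filtering the wedge power of $\sigma^{*}M$ then reduces everything, by induction on $n_1+\cdots+n_k$ and K\"unneth, to the elementary fact $H^{1}(\nP^1,\sO_{\nP^1}(a))=0$ for $a\geq -1$; no Bott computation or representation-theoretic optimisation is needed, and the constant $1/n_1!\cdots n_k!$ falls out of the induction rather than from a chamber-volume estimate. To repair your proposal you would need to supply precisely such a mechanism for the single-factor (Veronese) case; as written, the ``main difficulty'' you flag is the entire content of the theorem.
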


As $K_{p,q+1}(X, B; L) = K_{p,q}(X, B+L ; L)$, it is reasonable to assume that $b < d$ in Theorem \ref{thm:asyvanprodprojsp}. But we do not need this assumption for the proof.

\medskip

We give a sketch of the proof of Theorem \ref{thm:asyvanprodprojsp} for the Veronese case.
Let $M_{\sO_{\nP^n}(d)}$ be the kernel of the evaluation map $H^0(\nP^n, \sO_{\nP^n}(d)) \otimes \sO_{\nP^n} \to \sO_{\nP^n}(d)$. It is well known that
$$
K_{p,q}(\nP^n, \sO_{\nP^n}(d)) = H^{q-1}(\nP^n, \wedge^{p+q-1} M_{\sO_{\nP^n}(d)} (d))~~\text{ for $p \geq 0$ and $q \geq 2$}.
$$
The main idea is to work on $\nP^{n-1} \times \nP^1$ instead of $\nP^n$ via the finite map $\sigma \colon \nP^{n-1} \times \nP^1 \to \nP^n$ given by $(\xi, z) \mapsto \xi+z$, where $\nP^n$ is regarded as the Hilbert scheme of $n$ points on $\nP^1$ and $\sigma$ is the universal family. Note that $\sigma_*( \sO_{\nP^{n-1}} \boxtimes \sO_{\nP^1}(n-1)) = \sO_{\nP^n}^{\oplus n}$. For each $2 \leq q \leq n+1$, the problem is equivalent to showing that
$$
H^{q-1}(\nP^{n-1} \times \nP^1, \wedge^{p+q-1} \sigma^* M_{\sO_{\nP^n}(d)} \otimes \sO_{\nP^{n-1}}(d) \boxtimes \sO_{\nP^1}(d+n-1))=0~~\text{ for $0 \leq p \leq O(d^{q-1})$}.
$$
An advantage of working on $\nP^{n-1} \times \nP^1$ is that we can use the following short exact sequence
$$
0 \longrightarrow \bigoplus \sO_{\nP^{n-1}} \boxtimes \sO_{\nP^1}(-n) \longrightarrow  \sigma^* M_{\sO_{\nP^n}(d)} \longrightarrow  M_{\sO_{\nP^{n-1}}(d)} \boxtimes \sO_{\nP^1}(d) \longrightarrow  0,
$$
which provides a way to proceed by induction on $n$.
 By considering the natural filtration of $\wedge^{p+q-1}\sigma^*M_{\sO_{\nP^n}(d)}$, we reduce the problem to proving that
$$
H^{q-1}(\nP^{n-1} \times \nP^1, \wedge^i M_{\sO_{\nP^{n-1}}(d)} (d) \boxtimes \sO_{\nP^1}(a_i)) = 0~~\text{ for $0 \leq i \leq O(d^{q-1})$},
$$
where $a_i=id-(p+q-1-i)n+d+n-1$.
By induction on $n$, we can assume that $H^{j}(\nP^{n-1}, \wedge^i M_{\sO_{\nP^{n-1}}(d)} (d))=0$ for $0 \leq i \leq O(d^j)$ and $j=q-2, q-1$. By the K\"{u}nneth formula, it is sufficient to check that
$$
H^1(\nP^1, \sO_{\nP^1}(a_i))=0~~\text{ when $i \geq O(d^{q-2})$}.
$$
But we have 
$$
a_i = id + (d+2n+in-qn) - 1 - pn \geq O(d^{q-1})-1-pn \geq -1
$$
as soon as $0 \leq p \leq O(d^{q-1})$. Thus $K_{p,q}(\nP^n, \sO_{\nP^n}(d)) =0$ for this range of $p$.  The same argument works for the general Segre--Veronese case.

\medskip

There has been a great deal of attention to the syzygies of Veronese or Segre--Veronese embeddings, e.g. \cite{Bruce, BEGY, BCR1, BCR2, FX, ORS, OP, Raicu, Rubei}. The syzygies of these varieties have connections to representation theory and combinatorics. It would be exceedingly interesting to know whether the method of the present paper could make progress on the study of the Veronese or Segre--Veronese syzygies.

\medskip

The paper is organized as follows. After reviewing basic necessary facts in Section \ref{sec:prelim}, we prove Theorem \ref{thm:asyvanprodprojsp} in Section \ref{sec:asyvan}, where we also show Theorem \ref{thm:asyvan} following Raicu's argument in \cite{Raicu}. Section \ref{sec:openprob} is devoted to presenting some open problems on asymptotic syzygies of algebraic varieties.

\subsection*{Acknowledgements}
The author is very grateful to Lawrence Ein, Sijong Kwak, and Wenbo Niu for inspiring discussions and valuable comments, and he is indebted to Daniel Erman for introducing some references. The author would like to thank the referees for careful reading of the paper.

\section{Preliminaries}\label{sec:prelim}

We collect basic facts which are used to prove the main theorems of the paper.

\subsection{Koszul Cohomology}
Let $X$ be a projective variety, $B$ be a coherent sheaf on $X$, and $L$ be a very ample line bundle on $X$, which gives an embedding
$$
X \subseteq \nP H^0(X, L) = \nP^r.
$$
Let $S:=S(H^0(X, L)) = \bigoplus_{m \geq 0} S^mH^0(X, L)$ be the homogeneous coordinate ring of $\nP^r$, and
$$
R=R(X, B; L):=\bigoplus_{m \in \mathbb{Z}} H^0(X, B \otimes L^m)
$$
be the graded section $S$-module of $B$ associated to $L$. 
Denote by $S_+ \subseteq S$ the irrelevant maximal ideal, and define the \emph{Koszul cohomology group} to be
$$
K_{p,q}(X, B; L) := \Tor_p^S(R, S/S_+)_{p+q}.
$$
Then $R$ has a minimal free resolution
$$
 \xymatrix{
 0 & R \ar[l]& E_0 \ar[l]  & E_1  \ar[l] & \ar[l] \cdots & \ar[l] E_r \ar[l]  &  \ar[l]0 
 }
$$
where
$$
E_p = \bigoplus_{q} K_{p,q}(X, B; L)  \otimes_{\Bbbk} S(-p-q).
$$
Notice that $K_{p,q}(X, B; L)$ is the vector space of \emph{$p$-th syzygies of weight $q$} and it is  the cohomology of the Koszul-type complex
$$
\begin{array}{l}
\wedge^{p+1} H^0(X, L) \otimes H^0(X, B \otimes L^{q-1}) \longrightarrow \wedge^p  H^0(X, L)\otimes H^0(X, B \otimes L^q)\\[5pt]
~\text{ }~\text{ }~\text{ }~\text{ }~\text{ }~\text{ }~\text{ }~\text{ }~\text{ }~\text{ }~\text{ }~\text{ }~\text{ }~\text{ }~\text{ }~\text{ }~\text{ }~\text{ }~\text{ }~\text{ }~\text{ }~\text{ }~\text{ }~\text{ }~\text{ }~\text{ }~\text{ }~\text{ }~\text{ }~\text{ }~\text{ }
\longrightarrow \wedge^{p-1}H^0(X, L) \otimes H^0(X, B \otimes L^{q+1}).
\end{array}
$$
When $B=\sO_X$, we set $K_{p,q}(X, L) :=K_{p,q}(X, \sO_X; L)$.

\medskip

Now, let $L$ be a globally generated line bundle on a projective variety $X$.
Consider the evaluation map
$$
\ev \colon H^0(X, L)\otimes \sO_{X} \longrightarrow  L,
$$ 
which is surjective since $L$ is globally generated. Denote by $M_{L}$ the kernel bundle of the evaluation map $\ev$. Then we obtain a short exact sequence of vector bundles on $X$:
\begin{equation}\label{eq:M_Lses}
0\longrightarrow M_{L} \longrightarrow H^0(X, L)\otimes \sO_{X} \longrightarrow L \longrightarrow 0.
\end{equation}
We use the following well-known fact to compute the Koszul cohomology group.

\begin{proposition}[{cf. \cite[Corollary 3.3]{EL2}}]\label{prop:koszulcoh}
Let $X$ be a projective variety, $B$ be a coherent sheaf on $X$, and $L$ be a very ample line bundle on $X$.
Assume that $H^i(X, B \otimes L^m)=0$ for $i >0$ and $m >0$. Fix $q \geq 2$. Then we have
$$
K_{p,q}(X, B; L)=H^{q-1}(X, \wedge^{p+q-1}M_L \otimes B \otimes L)~\text{ for $p \geq 0$}.
$$
In particular, $K_{p,q}(X, B; L)=0$ for $0 \leq p \leq p_0$ if and only if 
$$
H^{q-1}(X, \wedge^j M_L \otimes B \otimes L) = 0~~\text{ for $0 \leq j \leq p_0 + q-1$}.
$$
\end{proposition}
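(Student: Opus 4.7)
The plan is to extract the identification from the defining sequence
\begin{equation*}
0 \longrightarrow M_L \longrightarrow V \otimes \sO_X \longrightarrow L \longrightarrow 0, \qquad V := H^0(X, L),
\end{equation*}
by wedging and exploiting the vanishing hypothesis. Because $L$ is a line bundle, the natural two-step filtration on $\wedge^k(V \otimes \sO_X)$ yields, for each $k \geq 1$, the short exact sequence
\begin{equation*}
0 \longrightarrow \wedge^k M_L \longrightarrow \wedge^k V \otimes \sO_X \longrightarrow \wedge^{k-1} M_L \otimes L \longrightarrow 0.
\end{equation*}
After twisting by $B \otimes L^{m}$ with $m \geq 1$, the middle term has no higher cohomology by assumption, so the long exact sequence furnishes both a connecting isomorphism
\begin{equation*}
H^i(X, \wedge^{k-1} M_L \otimes B \otimes L^{m+1}) \;\xrightarrow{\;\cong\;}\; H^{i+1}(X, \wedge^k M_L \otimes B \otimes L^{m}) \qquad (i \geq 1)
\end{equation*}
and a four-term exact piece in degree zero.

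Iterating the connecting isomorphism $q-2$ times, each step raising the twist by one and lowering the cohomological degree by one, telescopes to
\begin{equation*}
H^{q-1}(X, \wedge^{p+q-1} M_L \otimes B \otimes L) \;\cong\; H^1(X, \wedge^{p+1} M_L \otimes B \otimes L^{q-1}).
\end{equation*}
The four-term piece in degree zero with $(k,m) = (p+1, q-1)$ then reads
\begin{equation*}
\wedge^{p+1} V \otimes H^0(B \otimes L^{q-1}) \xrightarrow{\ \alpha \ } H^0(X, \wedge^p M_L \otimes B \otimes L^q) \longrightarrow H^1(X, \wedge^{p+1} M_L \otimes B \otimes L^{q-1}) \longrightarrow 0,
\end{equation*}
and applying the same piece with $(k,m) = (p, q)$ and $(p-1, q+1)$ realizes $H^0(X, \wedge^p M_L \otimes B \otimes L^q)$ as the kernel of an induced map $\wedge^p V \otimes H^0(B \otimes L^q) \to \wedge^{p-1} V \otimes H^0(B \otimes L^{q+1})$.

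The only nontrivial step, and the main obstacle, is to verify that both $\alpha$ and this kernel-defining map coincide with the Koszul differentials of the complex computing $K_{p,q}(X, B; L)$. This is a compatibility argument: both boundary operators are induced by contraction with the tautological element of $V^* \otimes V$ along the evaluation $V \otimes \sO_X \to L$, which is exactly how the Koszul derivation on $\wedge^{\bullet} V \otimes R(X, B; L)$ is built. Granted this, the displayed cokernel equals $K_{p,q}(X, B; L)$, yielding the main equality, and the ``in particular'' clause is then immediate.
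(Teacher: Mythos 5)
Your argument for the main identity follows essentially the same route as the paper: the wedge power sequence $0 \to \wedge^{k} M_L \to \wedge^{k} H^0(X,L) \otimes \sO_X \to \wedge^{k-1} M_L \otimes L \to 0$, the identification $K_{p,q}(X,B;L) = H^1(X, \wedge^{p+1} M_L \otimes B \otimes L^{q-1})$ via the Koszul complex (the paper simply cites \cite[Section 2.1]{AN}, \cite[Section 1]{EL1} for the diagram chase you sketch, and the compatibility of the boundary maps with the Koszul differentials is indeed the standard point), and then the telescoping isomorphisms up to $H^{q-1}(X, \wedge^{p+q-1} M_L \otimes B \otimes L)$ using $H^i(X, B\otimes L^m)=0$ for $i,m>0$. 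The one place where you are too quick is the final claim that the ``in particular'' clause is \emph{immediate}: the main equality only converts $K_{p,q}=0$ for $0 \leq p \leq p_0$ into the vanishing of $H^{q-1}(X,\wedge^{j} M_L \otimes B \otimes L)$ for $q-1 \leq j \leq p_0+q-1$, whereas the stated equivalence runs over $0 \leq j \leq p_0+q-1$; the groups with $0 \leq j \leq q-2$ are not of the form $K_{p,q}$ for any $p \geq 0$, so their vanishing must be checked separately. It does hold, by the same telescoping you already used: for $0 \leq j \leq q-2$ one has $H^{q-1}(X, \wedge^{j} M_L \otimes B \otimes L) \cong H^{q-j-1}(X, B \otimes L^{j+1}) = 0$ since $q-j-1 \geq 1$ and $j+1 > 0$, which is exactly the closing step of the paper's proof. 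Adding that one line makes your proposal complete.
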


\begin{proof}
By taking wedge product of (\ref{eq:M_Lses}), we have a short exact sequence
\begin{equation}\label{eq:wedgeM_Lses}
0 \longrightarrow \wedge^{p+1} M_L \longrightarrow \wedge^{p+1} H^0(X, L) \otimes \sO_X \longrightarrow  \wedge^{p} M_L \otimes L \longrightarrow  0.
\end{equation}
By using the Koszul-type complex and chasing through the diagram, we see that
$$
K_{p,q}(X, B; L) = H^1(X, \wedge^{p+1}M_L \otimes B \otimes L^{q-1})~~\text{ for $p \geq 0$ and $q \geq 2$}.
$$
See \cite[Section 2.1]{AN} or \cite[Section 1]{EL1} for the complete proof. Now, from (\ref{eq:wedgeM_Lses}), we find that
$$
H^1(X, \wedge^{p+1}M_L \otimes B \otimes L^{q-1}) = H^2(X, \wedge^{p+2}M_L \otimes B \otimes L^{q-2}) = \cdots = H^{q-1}(X, \wedge^{p+q-1}M_L \otimes B \otimes L), 
$$
so the first assertion holds. Thus $K_{p,q}(X, B; L)=0$ for $0 \leq p \leq p_0$ if and only if 
$$
H^{q-1}(X, \wedge^j M_L \otimes B \otimes L) = 0~~\text{ for $q-1 \leq j \leq p_0 + q-1$}.
$$
But we get from (\ref{eq:wedgeM_Lses}) that
$$
H^{q-1}(X, \wedge^i M_L \otimes B \otimes L) = H^{q-2}(X, \wedge^{i-1}M_L \otimes B \otimes L^2) = \cdots = H^{q-i-1}(X, B \otimes L^{i+1}) = 0
$$
for any $0 \leq i \leq q-2$. Thus the second assertion holds.
\end{proof}

We refer to \cite{AN, EL1, EL2, Eisenbud, Green1} for more details on syzygies and Koszul cohomology.

\subsection{Filtrations for Wedge Products}
For a short exact sequence $0 \to U \to V \to W \to 0$ of vector bundles on a projective variety $X$ and an integer $k \geq 1$, there is a natural filtration
\begin{equation}\label{eq:filt}
0 = F^0 \subseteq F^1 \subseteq \cdots \subseteq F^k \subseteq F^{k+1}=\wedge^k V
\end{equation}
such that 
$$
F^{p+1}/F^p = \wedge^{k-p}U \otimes \wedge^p W~~\text{ for all $0 \leq p \leq k$. }
$$

\begin{lemma}\label{lem:filt}
Let $0 \to U \to V \to W \to 0$ be a short exact sequence of vector bundles on a projective variety $X$.  If $H^q(X, \wedge^{k-p}U \otimes \wedge^p W)=0$ for all $0 \leq p \leq k$, then $H^q(X, \wedge^k V)=0$.
\end{lemma}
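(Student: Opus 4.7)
The plan is to iterate the long exact sequences in cohomology attached to the steps of the filtration (\ref{eq:filt}) and push the desired vanishing up one layer at a time.

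More precisely, I will prove by induction on $p \in \{0,1,\ldots,k+1\}$ that $H^q(X, F^p) = 0$. The base case $p=0$ is trivial since $F^0 = 0$. For the inductive step, assume $H^q(X, F^p) = 0$ and consider the short exact sequence
\[
0 \longrightarrow F^p \longrightarrow F^{p+1} \longrightarrow F^{p+1}/F^p \longrightarrow 0,
\]
whose associated long exact sequence contains
\[
H^q(X, F^p) \longrightarrow H^q(X, F^{p+1}) \longrightarrow H^q(X, F^{p+1}/F^p).
\]
The leftmost term vanishes by the inductive hypothesis, and by the description of the graded pieces of (\ref{eq:filt}) together with the hypothesis of the lemma, the rightmost term is $H^q(X, \wedge^{k-p} U \otimes \wedge^p W) = 0$. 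Hence $H^q(X, F^{p+1}) = 0$, closing the induction.

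Taking $p = k$ yields $H^q(X, F^{k+1}) = H^q(X, \wedge^k V) = 0$, as desired. There is no real obstacle here: the only thing to be careful about is matching indices with the convention of (\ref{eq:filt}), so that the initial step $p=0$ indeed gives the vanishing of $H^q(X, F^1) = H^q(X, \wedge^k U \otimes \wedge^0 W)$ directly from the hypothesis (rather than requiring a separate argument).
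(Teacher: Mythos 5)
Your proof is correct and is exactly the argument the paper leaves implicit when it says the lemma follows ``immediately'' from the filtration (\ref{eq:filt}): an induction up the filtration using the long exact cohomology sequences of $0 \to F^p \to F^{p+1} \to F^{p+1}/F^p \to 0$. No issues; your index bookkeeping matches the convention of (\ref{eq:filt}).
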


\begin{proof}
By considering the natural filtration (\ref{eq:filt}), we immediately obtain the lemma.
\end{proof}

\subsection{Divided and Symmetric Powers}
Let $V$ be a finite dimensional vector space over $\Bbbk$. For an integer $n \geq 1$, the symmetric group $\mf{S}_n$ naturally acts on the tensor power $T^n  V := V^{\otimes n}$ by permuting the factors.
The \emph{divided power} of $V$ is the subspace
$$
D^n V:=\{ \omega \in T^n V \mid \sigma(\omega) = \omega~\text{for all $\sigma \in \mf{S}_n$} \} \subseteq T^n V,
$$
while the \emph{symmetric power} $S^n V$ of $V$ is the quotient of $T^n V$ by the subspace spanned by $\sigma(\omega) - \omega$ for all $\omega \in T^n V$ and $\sigma \in \mf{S}_n$. When $n=0$, we set $T^0 V = D^0 V = S^0 V = \Bbbk$. We have a natural identification
$$
D^n V = (S^n V^*)^*.
$$
By composing the inclusion of $D^n V$ into $T^n V$ with the projection onto $S^n V$, we have a natural map $D^n V \to S^n V$. This map is an isomorphism in characteristic zero, but it may be neither injective nor surjective in general. We can also define divided and symmetric powers of vector bundles on projective varieties. We refer to \cite[Section 3]{AFPRW} for more details. 

\medskip

Let $0 \to U \to V \to W \to 0$ be a short exact sequence of vector bundles on a projective variety $X$ with $\rank W = 1$. Let $\pi \colon \nP(V^*) \to X$ be the natural projection. Note that $\sO_{\nP(V^*)}(-\nP(U^*)) = \sO_{\nP(V^*)}(-1) \otimes \pi^* W^*$. For $k \geq 0$, we have a short exact sequence on $\nP(V^*)$:
$$
0 \longrightarrow \sO_{\nP(V^*)}(k) \otimes  \pi^* W^* \longrightarrow \sO_{\nP(V^*)}(k+1) \longrightarrow \sO_{\nP(U^*)}(k+1) \longrightarrow 0.
$$
By applying $\pi_*$, we get a short exact sequence on $X$:
$$
0 \longrightarrow S^k V^* \otimes W^* \longrightarrow S^{k+1} V^* \longrightarrow S^{k+1} U^* \longrightarrow 0.
$$
This construction was suggested by Lawrence Ein, and a purely algebraic construction of this kind of exact sequence can be found in \cite[Corollary V.1.15]{ABW}.
By taking the dual, we obtain a short exact sequence on $X$:
\begin{equation}\label{eq:divpowses}
0 \longrightarrow D^{k+1} U \longrightarrow D^{k+1} V \longrightarrow D^k V \otimes W \longrightarrow 0.
\end{equation}
We remark that $S^{k+1} V \to S^k V \otimes W$ may not be surjective in positive characteristic.

\medskip

Now, let $C$ be a smooth projective curve, and $L$ be a line bundle on $C$. For an integer $k \geq 0$, the symmetric group $\mf{S}_{k+1}$ naturally acts on the $(k+1)$-th ordinary product $C^{k+1}$ of $C$ by permuting the components, and the line bundle 
$$
L^{\boxtimes k+1}:=\underbrace{L \boxtimes \cdots \boxtimes L}_{\text{$k+1$ times}}
$$
on  $C^{k+1}$ descends to a line bundle $T_{k+1}(L)$ on the $(k+1)$-th symmetric product $C_{k+1}$ of $C$ (see \cite[Subsection 3.1]{ENP}). Note that
$$
H^0(C_{k+1}, T_{k+1}(L)) = H^0(C^{k+1}, L^{\boxtimes k+1})^{\mf{S}_{k+1}} = D^{k+1} H^0(C, L).
$$
If $C=\nP^1$ and $L = \sO_{\nP^1}(d)$ with $d \geq 1$, then $(\nP^1)_n = \nP^n$ and $T_n(\sO_{\nP^1}(d)) = \sO_{\nP^n}(d)$. By an arbitrary characteristic version of Hermite reciprocity (see \cite[Remark 3.2]{AFPRW}), we have
$$
\begin{array}{l}
H^0(\nP^n, T_n(\sO_{\nP^1}(d))) ~=~D^n H^0(\nP^1, \sO_{\nP^1}(d)) ~=~D^n (S^d H^0(\nP^1, \sO_{\nP^1}(1))) \\[5pt]
~~ = ~S^d (D^n H^0(\nP^1, \sO_{\nP^1}(1))) ~=~S^d H^0(\nP^n, T_n(\sO_{\nP^1}(1))).
\end{array}
$$

\subsection{Tautological Bundles on Projective Spaces}
Recall that $\sigma \colon \nP^{n-1} \times \nP^1 \to \nP^n$ is the finite map of degree $n$ given by $(\xi, z) \mapsto \xi+z$ by viewing $\nP^n$ as the Hilbert scheme of $n$ points on $\nP^1$. For any integer $k$, the \emph{tautological bundle} on $\nP^n$ is defined as
$$
E_{n, \sO_{\nP^1}(k)} := \sigma_* (\sO_{\nP^{n-1}} \boxtimes \sO_{\nP^1}(k)),
$$ 
which is a vector bundle of rank $n$. The tautological bundles on symmetric products of curves play an important role in the study of secant varieties of curves (see \cite{ENP}).

\begin{lemma}\label{lem:tautobund}
$E_{n, \sO_{\nP^1}(k)}$ is splitting if and only if $-1 \leq k \leq n-1$. In this case, 
$$
E_{n, \sO_{\nP^1}(k)} = \sO_{\nP^n}^{\oplus k+1} \oplus \sO_{\nP^n}(-1)^{\oplus n-1-k}.
$$
\end{lemma}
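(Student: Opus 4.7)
The plan is to combine a K\"{u}nneth-type cohomology computation with Horrocks' splitting criterion on $\nP^n$. Since $\sigma$ is finite flat of degree $n$, the sheaf $E := E_{n,\sO_{\nP^1}(k)} = \sigma_*(\sO_{\nP^{n-1}} \boxtimes \sO_{\nP^1}(k))$ is locally free of rank $n$. The first technical input I need is $\sigma^*\sO_{\nP^n}(1) \cong \sO_{\nP^{n-1}}(1) \boxtimes \sO_{\nP^1}(1)$, which I would verify by restricting $\sigma$ to the two natural families of fibers: $\sigma$ sends $\{\xi\}\times\nP^1$ isomorphically onto the pencil $\{\xi+z : z\in \nP^1\}$ in $\nP^n$ (degree $1$), and $\nP^{n-1}\times\{z\}$ isomorphically onto the hyperplane of degree-$n$ divisors through $z$ (also degree $1$). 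Combining the projection formula with the K\"{u}nneth formula (using that $\sigma$ is finite, so higher direct images vanish) gives, for every $m \in \nZ$,
\[
H^j(\nP^n, E(m)) \;=\; \bigoplus_{a+b=j} H^a(\nP^{n-1}, \sO(m)) \otimes H^b(\nP^1, \sO(k+m)).
\]

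Assume $n \geq 2$ (the case $n = 1$ is trivial, since $\sigma$ is then an isomorphism). By Horrocks' criterion on $\nP^n$, the bundle $E$ splits as a direct sum of line bundles if and only if $H^j(\nP^n, E(m)) = 0$ for every $m \in \nZ$ and every $0 < j < n$. In the K\"{u}nneth sum the only potentially nonzero contributions in this range are $(a,b) = (0,1)$, which forces $j = 1$ and is nonzero for some $m$ iff $k \leq -2$ (take $m = 0$), and $(a,b) = (n-1, 0)$, which forces $j = n-1$ and is nonzero for some $m$ iff $k \geq n$ (take $m = -n$). Hence the intermediate cohomology vanishes in all twists precisely when $-1 \leq k \leq n-1$, which is the ``if and only if'' assertion.

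For the splitting type in this range, I would write $E \cong \bigoplus_{j \in \nZ} \sO_{\nP^n}(-j)^{\oplus c_j}$ and pin down the $c_j$ from three K\"{u}nneth values: $h^0(E) = k+1$, $h^0(E(-1)) = 0$, and $h^0(E(1)) = n(k+2)$. The vanishing $h^0(E(-1)) = 0$ forces $c_j = 0$ for all $j < 0$, so no positive twists appear; then $h^0(E) = c_0$ gives $c_0 = k+1$; and finally $h^0(E(1)) = (k+1)(n+1) + c_1$ together with the rank constraint $\sum_j c_j = n$ yields $c_1 = n-1-k$ and $c_j = 0$ for $j \geq 2$. This produces the asserted decomposition $E \cong \sO_{\nP^n}^{\oplus k+1} \oplus \sO_{\nP^n}(-1)^{\oplus n-1-k}$.

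The proof is short once the cohomology formula is set up; no step is especially delicate. The only part requiring a moment of care is the identification $\sigma^*\sO_{\nP^n}(1) \cong \sO_{\nP^{n-1}}(1)\boxtimes \sO_{\nP^1}(1)$, and the argument is characteristic-free thanks to the Horrocks criterion.
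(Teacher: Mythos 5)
Your proof is correct and follows essentially the same route as the paper: push forward along the finite map $\sigma$, compute all twists $H^j(\nP^n, E_{n,\sO_{\nP^1}(k)}(m))$ via the K\"unneth formula, apply the Horrocks criterion for the splitting, and then pin down the splitting type from $h^0$ in low twists together with the rank. The only differences are cosmetic: you make explicit the identity $\sigma^*\sO_{\nP^n}(1) \cong \sO_{\nP^{n-1}}(1)\boxtimes\sO_{\nP^1}(1)$ and the vanishing $h^0(E(-1))=0$, which the paper uses implicitly.
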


\begin{proof}
Since $\sigma \colon \nP^{n-1} \times \nP^1 \to \nP^n$ is a finite map, we have 
$$
H^i(\nP^n, E_{n, \sO_{\nP^1}(k)}(m)) = H^i(\nP^{n-1} \times \nP^1, \sO_{\nP^{n-1}}(m) \boxtimes \sO_{\nP^1}(m+k)).
$$
for any $i \geq 0$ and $m \in \nZ$.
The K\"{u}nneth formula shows that
\begin{small}
$$
\begin{array}{l}
H^i(\nP^{n-1} \times \nP^1, \sO_{\nP^{n-1}}(m) \boxtimes \sO_{\nP^1}(m+k)) \\[5pt]
= \big(H^{i-1}(\nP^{n-1}, \sO_{\nP^{n-1}}(m) ) \otimes H^1(\nP^1, \sO_{\nP^1}(m+k))\big) \oplus \big(H^{i}(\nP^{n-1}, \sO_{\nP^{n-1}}(m) ) \otimes H^0(\nP^1, \sO_{\nP^1}(m+k))\big).
\end{array}
$$
\end{small}\\[-7pt]
Then we see that
$$
\begin{array}{l}
H^i(\nP^n, E_{n, \sO_{\nP^1}(k)}(m)) = 0~~\text{ for $2 \leq i \leq n-2$ and $m \in \nZ$},\\[5pt]
H^1(\nP^n, E_{n, \sO_{\nP^1}(k)}(m)) = 0~\text{for $m \in \nZ$} ~ \Longleftrightarrow  ~k \geq -1, \\[5pt]
H^{n-1}(\nP^n, E_{n, \sO_{\nP^1}(k)}(m))=0~\text{for $m \in \nZ$} ~ \Longleftrightarrow ~k \leq n-1.
\end{array}
$$
By the Horrocks criterion, the first assertion of the lemma follows. Now, we observe that $h^0(\nP^n, E_{n, \sO_{\nP^1}(k)})=k+1$, and $h^0(\nP^n,  E_{n, \sO_{\nP^1}(k)}(1)) = n(k+2) = (n+1)(k+1) + n-1-k$. This implies the second assertion of the lemma.
\end{proof}

\begin{remark}
The following alternative approach to Lemma \ref{lem:tautobund} was suggested by Lawrence Ein. Let $D_n$ be the image of the injective map $\nP^{n-1} \times \nP^1 \to \nP^n \times \nP^1$ given by $(\xi, z) \mapsto (\xi+z, z)$. Note that $\sO_{\nP^n \times \nP^1}(-D_n) = \sO_{\nP^n}(-1) \boxtimes \sO_{\nP^1}(-n)$. For any integer $k$, we have a short exact sequence on $\nP^n \times \nP^1$:
$$
0 \longrightarrow \sO_{\nP^n}(-1) \boxtimes \sO_{\nP^1}(k-n) \longrightarrow \sO_{\nP^n} \boxtimes \sO_{\nP^1}(k) \longrightarrow \sO_{\nP^{n-1}} \boxtimes \sO_{\nP^1}(k) \longrightarrow 0.
$$
Let $p \colon \nP^n \times \nP^1 \to \nP^n$ be the projection to the first component.
When $-1 \leq k \leq n-1$, by applying $p_*$, we obtain a short exact sequence on $\nP^n$:
$$
0 \longrightarrow H^0(\nP^1, \sO_{\nP^1}(k)) \otimes \sO_{\nP^n} \longrightarrow E_{n, \sO_{\nP^1}(k)} \longrightarrow H^1(\nP^1, \sO_{\nP^1}(k-n)) \otimes \sO_{\nP^n}(-1) \longrightarrow 0.
$$
Thus $E_{n, \sO_{\nP^1}(k)}  = (H^0(\nP^1, \sO_{\nP^1}(k)) \otimes \sO_{\nP^n}) \oplus (H^1(\nP^1, \sO_{\nP^1}(k-n)) \otimes \sO_{\nP^n}(-1))$. When $k \leq -2$ or $k \geq n$, it is easy to check that $E_{n, \sO_{\nP^1}(k)}$ is not splitting.
\end{remark}

\begin{lemma}\label{lem:tautobundvan}
Let $Y$ be a projective variety, and $\sigma \colon Y \times \nP^{n-1} \times \nP^1 \to Y \times \nP^n$ be the finite map given by $(y, \xi, z) \mapsto (y, \xi+z)$. If $M$ is a vector bundle on $Y \times \nP^n$, then 
$$
H^q(Y \times \nP^n, M)=0~\Longleftrightarrow~H^q(Y \times \nP^{n-1} \times \nP^1, \sigma^* M \otimes (\sO_{Y \times \nP^{n-1}} \boxtimes \sO_{\nP^1}(n-1)))  = 0
$$
for any $q \geq 0$.
\end{lemma}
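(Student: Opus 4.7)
The plan is to reduce the claimed equivalence to the observation that the cohomology of a vector bundle on $Y \times \nP^n$ coincides with the cohomology of its pullback to $Y \times \nP^{n-1} \times \nP^1$ after suitable twist, by combining finiteness of $\sigma$, the projection formula, and the explicit computation $\sigma_*(\sO_{\nP^{n-1}} \boxtimes \sO_{\nP^1}(n-1)) = E_{n, \sO_{\nP^1}(n-1)} = \sO_{\nP^n}^{\oplus n}$ provided by Lemma \ref{lem:tautobund}.

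First, I would use that $\sigma = \id_Y \times \sigma_0$, where $\sigma_0 \colon \nP^{n-1} \times \nP^1 \to \nP^n$ is the finite map $(\xi, z) \mapsto \xi+z$. Since $\sigma_0$ is finite, so is $\sigma$, and hence $R^i\sigma_* = 0$ for $i \geq 1$; the Leray spectral sequence collapses to give
$$
H^q(Y \times \nP^{n-1} \times \nP^1, \sigma^* M \otimes F) = H^q(Y \times \nP^n, \sigma_*(\sigma^* M \otimes F))
$$
for any coherent sheaf $F$ on $Y \times \nP^{n-1} \times \nP^1$ and any $q \geq 0$. Taking $F = \sO_{Y \times \nP^{n-1}} \boxtimes \sO_{\nP^1}(n-1)$ and applying the projection formula, the right-hand side becomes $H^q(Y \times \nP^n, M \otimes \sigma_* F)$.

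Next, I would compute $\sigma_* F$ via flat base change along the flat projection $Y \times \nP^{n-1} \times \nP^1 \to \nP^{n-1} \times \nP^1$: since $\sigma = \id_Y \times \sigma_0$ and $F = \sO_Y \boxtimes (\sO_{\nP^{n-1}} \boxtimes \sO_{\nP^1}(n-1))$, one obtains
$$
\sigma_* F = \sO_Y \boxtimes \sigma_{0*}(\sO_{\nP^{n-1}} \boxtimes \sO_{\nP^1}(n-1)) = \sO_Y \boxtimes E_{n, \sO_{\nP^1}(n-1)}.
$$
By Lemma \ref{lem:tautobund} with $k = n-1$, this tautological bundle is $\sO_{\nP^n}^{\oplus n}$, so $\sigma_* F = \sO_{Y \times \nP^n}^{\oplus n}$. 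Consequently
$$
H^q(Y \times \nP^{n-1} \times \nP^1, \sigma^* M \otimes F) = H^q(Y \times \nP^n, M)^{\oplus n},
$$
from which the stated equivalence is immediate.

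I do not anticipate a genuine obstacle: every ingredient (finiteness of $\sigma$, the projection formula, flat base change for the product map, and the splitting $E_{n, \sO_{\nP^1}(n-1)} = \sO_{\nP^n}^{\oplus n}$) is either standard or already recorded in the preliminaries. The only minor point worth double-checking is the identification $\sigma_{0*}(\sO_{\nP^{n-1}} \boxtimes \sO_{\nP^1}(n-1)) = E_{n, \sO_{\nP^1}(n-1)}$, but this is precisely the definition of the tautological bundle recalled just before Lemma \ref{lem:tautobund}.
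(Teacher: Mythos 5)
Your proposal is correct and follows essentially the same route as the paper: the paper's proof is exactly the combination of Lemma \ref{lem:tautobund}, the projection formula, and finiteness of $\sigma$ to get $\sigma_{*}\big(\sigma^* M \otimes (\sO_{Y \times \nP^{n-1}} \boxtimes \sO_{\nP^1}(n-1))\big) = M^{\oplus n}$ and conclude. Your extra verification via base change along $\sigma = \id_Y \times \sigma_0$ just makes explicit a step the paper leaves implicit.
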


\begin{proof}
By Lemma \ref{lem:tautobund} and the projection formula,
$\sigma_{*}\big(\sigma^* M \otimes (\sO_{Y \times \nP^{n-1}} \boxtimes \sO_{\nP^1}(n-1))\big) = M^{\oplus n}$.
Since $\sigma$ is a finite map, the assertion immediately follows.
\end{proof}

\section{Asymptotic Vanishing Theorem}\label{sec:asyvan}

The aim of this section is to prove Theorem \ref{thm:asyvan}. First, we construct a short exact sequence of vector bundles, which allows us to give a quick proof of Theorem \ref{thm:asyvanprodprojsp} by induction on dimension. We then explain how one can deduce Theorem \ref{thm:asyvan} from Theorem \ref{thm:asyvanprodprojsp}.

\subsection{Short Exact Sequence}
Let $C$ be a smooth projective curve, and $L_C$ be a line bundle on $C$. Let $Y$ be a smooth projective variety, and $L_Y$ be a line bundle on $Y$. Fix an integer $k \geq 0$. Assume that the line bundle $L_Y \boxtimes T_{k+1}(L_C)$ on $Y \times C_{k+1}$ is globally generated. Notice that
$$
H^0( Y \times C_{k+1}, L_Y \boxtimes T_{k+1}(L_C)) = H^0(Y, L_Y) \otimes D^{k+1} H^0(C, L_C).
$$
We have a short exact sequence on $Y \times C_{k+1}$:
$$
0 \longrightarrow  M_{L_Y \boxtimes T_{k+1}(L_C)} \longrightarrow  H^0(Y, L_Y) \otimes D^{k+1}H^0(C, L_C) \otimes \sO_{Y \times C_{k+1}} \longrightarrow  L_Y \boxtimes T_{k+1}(L_C) \longrightarrow  0.
$$
We can view $C_{k+1}=\{\text{effective divisors of degree $k+1$ on $C$}\}$ as the Hilbert scheme of $k+1$ points on $C$.
Let 
$$
\sigma \colon Y \times C_k \times C \longrightarrow Y \times C_{k+1}
$$ 
be the finite morphism given by $(y, \xi, z) \mapsto (y, \xi + z)$, and 
$p \colon Y \times C_k \times C \rightarrow C$
be the projection to the last component. 
By taking $\sigma^*$ of the above exact sequence on $Y \times C_{k+1}$, we get a short exact sequence on $Y \times C_k \times C$:
$$
0 \to \sigma^* M_{L_Y \boxtimes T_{k+1}(L_C)} \to H^0(Y, L_Y) \otimes D^{k+1}H^0(C, L_C) \otimes \sO_{Y \times C_{k} \times C} \to L_Y \boxtimes T_{k}(L_C) \boxtimes L_C \to 0,
$$
By taking $p_*$ and considering (\ref{eq:divpowses}), we get a short exact sequence on $C$:
\begin{footnotesize}
$$
0 \to H^0(Y, L_Y) \otimes D^{k+1}M_{L_C} \to H^0(Y, L_Y) \otimes D^{k+1}H^0(C, L_C) \otimes \sO_C \to H^0(Y, L_Y) \otimes D^k H^0(C, L_C) \otimes L_C \to 0 .
$$
\end{footnotesize}\\[-14pt]
Then we obtain the following commutative diagram with exact sequences on $Y \times C_k \times C$:

\begin{tiny}
$$
\xymatrixrowsep{0.27in}
\xymatrixcolsep{0.09in}
\xymatrix{
&&&0 \ar[d] &\\
&0 \ar[d] &&M_{L_Y \boxtimes T_k(L_C)} \boxtimes L_C \ar[d] &\\
0 \ar[r] & p^*(H^0(Y, L_Y) \otimes D^{k+1}M_{L_C}) \ar[r] \ar[d] & p^*(H^0(Y, L_Y) \otimes D^{k+1}H^0(C, L_C) \otimes \sO_{C}) \ar[r] \ar@{=}[d] & p^*(H^0(Y, L_Y) \otimes D^k H^0(C, L_C) \otimes L_C )\ar[r] \ar[d] & 0\\
0 \ar[r] & \sigma^* M_{L_Y \boxtimes T_{k+1}(L_C)} \ar[r] \ar[d] & H^0(Y, L_Y) \otimes D^{k+1} H^0(C, L_C) \otimes \sO_{Y \times C_k \times C} \ar[r] & L_Y \boxtimes T_{k}(L_C) \boxtimes L_C \ar[r] \ar[d] & 0\\
&M_{L_Y \boxtimes T_k(L_C)} \boxtimes L_C \ar[d] && 0 & \\
&0 &&  & 
}$$
\end{tiny}

\medskip

Now, assume that $C=\nP^1$ and $L_C = \sO_{\nP^1}(d)$ with $d \geq 1$. For an integer $n \geq 1$, we have
$$
(\nP^1)_n = \nP^n,~~T_n(\sO_{\nP^1}(d)) = \sO_{\nP^n}(d),~~M_{L_Y \boxtimes T_n(\sO_{\nP^1}(d))} = M_{L_Y \boxtimes \sO_{\nP^n}(d)}.
$$
Since $M_{\sO_{\nP^1}(d)} = \bigoplus \sO_{\nP^1}(-1)$, it follows that
$$
D^{n} M_{\sO_{\nP^1}(d)} = \big(S^{n} M_{\sO_{\nP^1}(d)}^* \big)^* = \bigoplus \sO_{\nP^1}(-n).
$$
Then the left vertical short exact sequence in the above commutative diagram gives a short exact sequence on $Y \times \nP^{n-1} \times \nP^1$:
\begin{equation}\label{eq:mainses}
0 \longrightarrow  \bigoplus \sO_{Y \times \nP^{n-1}} \boxtimes \sO_{\nP^1}(-n) \longrightarrow  \sigma^* M_{L_Y \boxtimes \sO_{\nP^n}(d)} \longrightarrow  M_{L_Y \boxtimes \sO_{\nP^{n-1}}(d)} \boxtimes \sO_{\nP^1}(d) \longrightarrow  0.
\end{equation}
When $Y$ is a point, the exact sequence (\ref{eq:mainses}) is
$$
0 \longrightarrow  \bigoplus \sO_{\nP^{n-1}} \boxtimes \sO_{\nP^1}(-n) \longrightarrow  \sigma^* M_{\sO_{\nP^n}(d)} \longrightarrow  M_{\sO_{\nP^{n-1}}(d)} \boxtimes \sO_{\nP^1}(d) \longrightarrow  0.
$$
When $n=1$, the finite map $\sigma$ is an isomorphism and the exact sequence (\ref{eq:mainses}) is
$$
0 \longrightarrow  \bigoplus \sO_Y \boxtimes \sO_{\nP^1}(-1) \longrightarrow  M_{L_Y \boxtimes \sO_{\nP^1}(d)} \longrightarrow  M_{L_Y} \boxtimes \sO_{\nP^1}(d) \longrightarrow  0.
$$

\subsection{Case of Product of Projective Spaces}
In this subsection, we prove Theorem \ref{thm:asyvanprodprojsp}. Recall that
$$
\begin{array}{l}
X=\nP^{n_1} \times \cdots \times \nP^{n_k}, ~B=\sO_{\nP^{n_1}}(b_1) \boxtimes \cdots \boxtimes \sO_{\nP^{n_k}}(b_k),~L=\sO_{\nP^{n_1}}(d_1) \boxtimes \cdots \boxtimes \sO_{\nP^{n_k}}(d_k),
\end{array}
$$
and $b=\min\{b_1, \ldots, b_k\},~d=\min\{d_1, \ldots, d_k\}$. Fix $2 \leq q \leq n_1 + \cdots +n_k+1$. Our aim is to show that if $d +b \geq 0$, then
$$
K_{p,q}(X, B;L) = 0~~\text{ for $0 \leq p \leq (1/n_1! \cdots n_k!)(d^{q-1}+bd^{q-2})$.}
$$

\medskip

We put
$$
Y:=\nP^{n_1} \times \cdots \times \nP^{n_{k-1}},~B_Y:=\sO_{\nP^{n_1}}(b_1) \boxtimes \cdots \boxtimes \sO_{\nP^{n_{k-1}}}(b_{k-1}),~L_Y:=\sO_{\nP^{n_1}}(d_1) \boxtimes \cdots \boxtimes \sO_{\nP^{n_{k-1}}}(d_{k-1}),
$$
and $n:=n_k$. Then 
$$
X=Y \times \nP^n,~B=B_Y \boxtimes \sO_{\nP^n}(b_k),~L=L_Y \boxtimes \sO_{\nP^n}(d_k).
$$
As $md+b \geq 0$ for any $m>0$, we have
$$
H^i(X, B \otimes L^m)=0~~\text{ for $i>0$ and $m > 0$}.
$$
By Proposition \ref{prop:koszulcoh}, for $p \geq 0$, we have
$$
K_{p,q}(X, B;L) = H^{q-1}\big(Y \times \nP^n, \wedge^{p+q-1}M_{L_Y \boxtimes \sO_{\nP^n}(d_k)} \otimes ((L_Y+B_Y) \boxtimes \sO_{\nP^n}(d_k+b_k))\big).
$$

\medskip

We proceed by induction on $n_1 + \cdots + n_k$. If $n_1+ \cdots + n_k = 1$, then $q=2$ and the problem is to check the cohomology vanishing
$$
H^1\big(\nP^1, \wedge^{p+1} M_{\sO_{\nP^1}(d)} \otimes \sO_{\nP^1}(d+b)\big) = 0~~\text{ for $0 \leq p \leq d+b$.}
$$
 As $\wedge^{p+1}M_{\sO_{\nP^1}(d)} \otimes \sO_{\nP^1}(d+b) = \bigoplus \sO_{\nP^1}(d+b-p-1)$ and $d+b-p-1 \geq -1$, the desired cohomology vanishing immediately follows. 

\medskip

Assume that $n_1 + \cdots + n_k \geq 2$. Fix $0 \leq p \leq (1/n_1! \cdots n_k!)(d^{q-1}+bd^{q-2})$. By Lemma \ref{lem:tautobundvan}, it is sufficient to show the cohomology vanishing on $Y \times \nP^{n-1} \times \nP^1$:
$$
H^{q-1}\big(\wedge^{p+q-1}\sigma^* M_{L_Y \boxtimes \sO_{\nP^n}(d_k)} \otimes ((L_Y+B_Y) \boxtimes \sO_{\nP^{n-1}}(d_k+b_k) \boxtimes \sO_{\nP^1}(d_k+b_k+n-1))\big)=0,
$$
where $\sigma \colon Y \times \nP^{n-1} \times \nP^1 \to Y \times \nP^n$ is the finite map given by $(y, \xi, z) \mapsto (y, \xi+z)$.
By considering the short exact sequence (\ref{eq:mainses}) and applying Lemma \ref{lem:filt} to $\wedge^{p+q-1}\sigma^* M_{L_Y \boxtimes \sO_{\nP^n}(d_k)}$, we can reduce the problem to proving the following:
$$
H^{q-1}\big(Y \times \nP^{n-1} \times \nP^1, (\wedge^i M_{L_Y \boxtimes \sO_{\nP^{n-1}}(d_k)} \otimes ((L_Y + B_Y) \boxtimes \sO_{\nP^{n-1}}(d_k+b_k))) \boxtimes \sO_{\nP^1}(a_i ) \big)= 0
$$
for $0 \leq i \leq p+q-1~ (\leq (1/n_1! \cdots n_k!)(d^{q-1}+bd^{q-2}) + q-1)$, where
$$
a_i:=id_k - (p+q-1-i)n + d_k+b_k+n-1 = i(d_k+n) + d_k + b_k +2n-qn -1 -pn.
$$
By the K\"{u}nneth formula, it is equivalent to showing that
\begin{equation}\label{eq:cohvan1}
H^{q-1}\big(Y \times \nP^{n-1}, \wedge^i M_{L_Y \boxtimes \sO_{\nP^{n-1}}(d_k)} \otimes ((L_Y + B_Y) \boxtimes \sO_{\nP^{n-1}}(d_k+b_k))\big) \otimes H^0(\nP^1,  \sO_{\nP^1}(a_i)) = 0
\end{equation}
and
\begin{equation}\label{eq:cohvan2}
H^{q-2}\big(Y \times \nP^{n-1}, \wedge^i M_{L_Y \boxtimes \sO_{\nP^{n-1}}(d_k)} \otimes ((L_Y + B_Y)) \boxtimes \sO_{\nP^{n-1}}(d_k+b_k)\big) \otimes H^1(\nP^1,  \sO_{\nP^1}(a_i)) = 0
\end{equation}
for $0 \leq i \leq p+q-1$. 

\medskip

By induction and Proposition \ref{prop:koszulcoh}, we can assume that
\begin{equation}\label{eq:cohvanind1}
H^{q-1}\big(Y \times \nP^{n-1}, \wedge^i M_{L_Y \boxtimes \sO_{\nP^{n-1}}(d_k)} \otimes ((L_Y + B_Y) \boxtimes \sO_{\nP^{n-1}}(d_k+b_k))\big) = 0
\end{equation}
for $0 \leq i \leq (1/n_1! \cdots n_{k-1}!(n_k-1)!)(d^{q-1}+bd^{q-2})+q-1$ when $q \geq 2$, and
\begin{equation}\label{eq:cohvanind2}
H^{q-2}\big(Y \times \nP^{n-1}, \wedge^i M_{L_Y \boxtimes \sO_{\nP^{n-1}}(d_k)} \otimes ((L_Y + B_Y) \boxtimes \sO_{\nP^{n-1}}(d_k+b_k))\big) = 0
\end{equation}
for $0 \leq i \leq (1/n_1! \cdots n_{k-1}!(n_k-1)!)(d^{q-2}+bd^{q-3})+q-2$ when $q \geq 3$. 
Observe that (\ref{eq:cohvan1}) immediately follows from (\ref{eq:cohvanind1}). It only remains to check (\ref{eq:cohvan2}). If $q=2$, then
$$
a_i = i(d_k+n) + d_k+b_k-1 - np \geq d_k+b_k-1 -np \geq d+b-1-np \geq -1
$$
since $p \leq (1/n_1! \cdots n_k!)(d+b) \leq (1/n)(d+b)$. Thus
$$
H^1(\nP^1,  \sO_{\nP^1}(a_i)) = 0,
$$
so the cohomology vanishing (\ref{eq:cohvan2}) holds for $q=2$. Next, we consider the case that $q \geq 3$. If $0 \leq i \leq (1/n_1! \cdots n_{k-1}!(n_k-1)!)(d^{q-2}+bd^{q-3})+q-2$, then (\ref{eq:cohvan2}) immediately follows from  (\ref{eq:cohvanind2}). If $i \geq (1/n_1! \cdots n_{k-1}!(n_k-1)!)(d^{q-2}+bd^{q-3})+q-1$, then
$$
\begin{array}{rcl}
a_i &=&id_k + \underbrace{(d_k+b_k + 2n +in-qn)}_{\geq 0} -1- np \\
&\geq &id - 1-np\\
&\geq &(1/n_1! \cdots n_{k-1}!(n-1)!)(d^{q-1}+bd^{q-2})-1- np\\
&\geq & -1
\end{array}
$$
since $p \leq (1/n_1! \cdots n_{k-1}!n!)(d^{q-1}+bd^{q-2})$. Thus
$$
H^1(\nP^1,  \sO_{\nP^1}(a_i) ) = 0,
$$
so the cohomology vanishing (\ref{eq:cohvan2}) holds in this case as well. We have shown (\ref{eq:cohvan1}) and (\ref{eq:cohvan2}), and they imply $K_{p,q}(X, B;L)=0$ as desired. We complete the proof of Theorem \ref{thm:asyvanprodprojsp}.

\subsection{General Case}

In this subsection, we prove Theorem \ref{thm:asyvan}. As we mentioned in the introduction, Raicu proved that Theorem \ref{thm:asyvan} can be deduced from Theorem \ref{thm:asyvanprodprojsp} for the case $k=3$ (see \cite[Corollary A.5]{Raicu}). Here we reproduce his proof for the completeness.
Recall that $X$ is an $n$-dimensional projective variety, $B$ is coherent sheaf, $A$ is an ample divisor, $P$ is an arbitrary divisor on $X$, and $L_d:=\sO_X(dA+P)$ for an integer $d \geq 1$.
Our aim is to show that for each $2 \leq q \leq n+1$ (the case that $q=1$ is trivial), there is a constant $C>0$ depending on $X, A, B, P$ such that if $d$ is sufficiently large, then
$$
K_{p,q}(X, B; L_d) =0~~\text{ for $0 \leq p \leq Cd^{q-1}$}.
$$

\medskip

We can choose integers $a_1, a_2, a_3 \geq 1$ with $\text{gcd}(a_1, a_2+a_3)=1$ such that 
$$
A_1:=a_1A, ~A_2:=a_2A + P, ~A_3:=a_3A-P
$$ 
are very ample and the natural maps
\begin{equation}\label{eq:projnorm}
S^{m_1}H^0(X, A_1) \otimes S^{m_2}H^0(X, A_2) \otimes S^{m_3}H^0(X, A_3) \longrightarrow H^0(X, m_1A_1 + m_2 A_2 + m_3 A_3)
\end{equation}
are surjective for all $m_1, m_2, m_3 > 0$. We may assume that $a_1 \gg a_2+ a_3$. Note that $a_1, a_2, a_3$ are depending only on $X, A, P$.
As $d$ is sufficiently large, we can find integers $d_1, d_2 \geq 1$ such that $d_1 \approx d/2a_1, d_2 \approx d/2(a_2+a_3)$ and $d+a_3 = a_1d_1 + (a_2+a_3)d_2$. Let $d_3:=d_2-1$. Then $d=a_1d_1 + a_2d_2 + a_3d_3$, and $L_d=\sO_X(d_1A_1 + d_2A_2 + d_3A_3)$. Note that $d_1 < d_3 < d_2$.

\medskip

Next, consider the commutative diagrams
$$
\xymatrix{
X \ar@{^{(}->}[r] \ar@{^{(}->}[d] & \nP H^0(X, A_1) \times \nP H^0(X, A_2) \times \nP H^0(X, A_3)=:\nP^{n_1} \times \nP^{n_2} \times \nP^{n_3}=Y \ar@{^{(}->}[d]\\
\nP^r:=\nP H^0(X, L_d) \ar@{^{(}->}[r] & \nP H^0(Y, \sO_{\nP^{n_1}}(d_1) \boxtimes \sO_{\nP^{n_2}}(d_2) \boxtimes \sO_{\nP^{n_3}}(d_3)) =: \nP^N.
}
$$
Clearly, $n_1, n_2, n_3$ are depending only on $X, A, P$.
Notice that $\nP^r$ is a linear subspace of $\nP^N$ by the surjectivity of (\ref{eq:projnorm}). We can regard $B$ as a coherent sheaf on $Y, ~\nP^r,$ and $\nP^N$. The syzygies of $B$ on $\nP^N$ are the syzygies of $B$ on $\nP^r$ tensoring with a Koszul complex of linear forms.
By letting 
$$
L_Y:=\sO_{\nP^{n_1}}(d_1) \boxtimes \sO_{\nP^{n_2}}(d_2) \boxtimes \sO_{\nP^{n_3}}(d_3),
$$ 
we see that
\begin{equation}\label{eq:compKpq}
\min\{ p \mid K_{p,q}(X, B; L_d) \neq 0 \} = \min\{ p \mid K_{p,q}(Y, B; L_Y) \neq 0\}.
\end{equation}

\medskip

Let $S:=S(H^0(X, A_1) \oplus H^0(X, A_2) \oplus H^0(X, A_3))$ be the total coordinate ring of $Y=\nP^{n_1} \times \nP^{n_2} \times \nP^{n_3}$ with the usual $\mathbb{Z}^3$-grading. Then
$$
M:=\bigoplus_{(a_1, a_2, a_3) \in \mathbb{Z}^3_{\geq 0}} H^0(Y, B \otimes \sO_{\nP^{n_1}}(a_1) \boxtimes \sO_{\nP^{n_2}}(a_2) \boxtimes \sO_{\nP^{n_3}}(a_3))
$$
is a finitely generated graded $S$-module. Consider the minimal free resolution of $M$:
$$
 \xymatrix{
 0 & M \ar[l]& E_0 \ar[l]  & E_1  \ar[l] & \ar[l] \cdots &  \ar[l] E_m \ar[l]  &  \ar[l]0 
 }
$$
where
$$
E_i=\bigoplus_{b^j=(b_1^j, b_2^j, b_3^j) \in S_i} F_{i,b^j} \otimes S(b^j)
$$
for some finite dimensional vector space $F_{i, b^j}$ over $\Bbbk$ and finite subsets $S_i \subseteq \mathbb{Z}^3$. Let 
$$
B_Y^j:=\sO_{\nP^{n_1}}(b_1^j) \boxtimes \sO_{\nP^{n_2}}(b_2^j) \boxtimes \sO_{\nP^{n_3}}(b_3^j)~~\text{ and }~~ b:=\min\{ b_1^j, b_2^j, b_3^j \mid b^j \in S_i, 0 \leq i \leq m \}.
$$
Note that $b$ is depending only on $X, A, B, P$.
Now, fix $2 \leq q \leq n+1 ~(\leq n_1+n_2 +n_3+1)$. By Theorem \ref{thm:asyvanprodprojsp}, for $0 \leq i \leq m$ and $b^j \in S_i$, we have
\begin{equation}\label{eq:K_{p-i,q+i}=0}
K_{p-i,q+i}(Y, B_Y^j; L_Y) = 0~~\text{ for $0 \leq p \leq (1/n_1!n_2!n_3!)(d_1^{q+i-1}+bd_1^{q+i-2})+i$}.
\end{equation}
Then (\ref{eq:compKpq}) and \cite[Theorem A.1]{Raicu} (cf. \cite[Proposition 1.d.3]{Green1}) show that
\begin{equation}\label{eq:K_{p,q}=0}
K_{p,q}(X, B;L_d)=K_{p,q}(Y, B; L_Y)=0~~\text{ for $0 \leq p \leq (1/n_1!n_2!n_3!)(d_1^{q-1}+bd_1^{q-2})$}.
\end{equation}
Recall that the numbers $a_1, a_2, a_3, n_1, n_2, n_3, b$ are depending only on $X, A, B, P$ but not on $d$. Since $d_1 \approx d/2a_1$ grows linearly in $d$ and $d$ is sufficiently large, Theorem \ref{thm:asyvan} follows.

\begin{remark}
Instead of applying \cite[Theorem A.1]{Raicu}, one can alternatively prove (\ref{eq:K_{p,q}=0}) as follows. From the minimal free resolution of $M$, we have an exact sequence on $Y$:
$$
 \xymatrix{
 0 & B \ar[l]& \sE_0 \ar[l]  & \sE_1  \ar[l] & \ar[l] \cdots &  \ar[l] \sE_m \ar[l]  &  \ar[l]0 
 }
$$
where
$$
\sE_i=\bigoplus_{b^j \in S_i} F_{i, b^j} \otimes B_Y^j.
$$
By using Proposition \ref{prop:koszulcoh} and chasing through the above exact sequence, we see that
$$
K_{p,q}(Y, B; L_Y) = H^{q-1}(Y, \wedge^{p+q-1}M_{L_Y} \otimes B \otimes L_Y)=0
$$
is implied by 
$$
K_{p-i,q+i}(Y, B_Y^j; L_Y) = H^{q+i-1}(Y, \wedge^{(p-i)+(q+i)-1}M_{L_Y} \otimes B_Y^j \otimes L_Y)=0
$$
for all $0 \leq i \leq m$ and $b^j \in S_i$. Thus (\ref{eq:compKpq}) and (\ref{eq:K_{p-i,q+i}=0}) imply (\ref{eq:K_{p,q}=0}).
\end{remark}

\section{Open Problems}\label{sec:openprob}

In this section, we discuss some open problems and conjectures. Let $X$ be a smooth projective variety of dimension $n$, and $B$ be a coherent sheaf on $X$. Fix an ample divisor $A$ and an arbitrary divisor $P$ on $X$, and put $L_d:=\sO_X(dA+P)$ for an integer $d \geq 1$. 

\medskip

For each $2 \leq q \leq n+1$, it would be extremely interesting to find an explicit constant $c>0$ in terms of $X, A, B, P$, and $q, d$ such that if $d$ is sufficiently large, then
$$
\text{$K_{p,q}(X, B; L_d)=0$ for $0 \leq p \leq c$ and $K_{c+1, q}(X, B; L_d) \neq 0$.}
$$
However, this problem is already very difficult for $q=2$. A generalization of Mukai's conjecture (cf. \cite[Conjecture 4.2]{EL1}) asks whether the property $N_d$ holds for $K_X+(n+2+d)A$ when $X$ is a smooth projective complex variety. But it is widely open even when $n=2$ and $d=0$. Moreover, Fujita's conjecture, which predicts that $K_X + (n+2+d)A$ is very ample for $d \geq 0$, is unknown when $n \geq 3$. However, when $A$ is very ample, Ein--Lazarsfeld established in \cite[Theorem 1]{EL1} that
$$
K_{p,q}(X, K_X + (n+1+d)A) = 0~~\text{ for $0 \leq p \leq d$ and $q \geq 2$}.
$$
It is reasonable to expect extending this result for $q \geq 3$.

\begin{problem}
Let $X$ be a smooth projective complex variety of dimension $n$, and $A$ be a very ample divisor on $X$. 
For each $2 \leq q \leq n+1$ and $d \geq 0$, find an explicit polynomial $P(x)$ of degree $q-1$ such that
$$
K_{p,q}(X, K_X + (n+1+d)A) = 0~~\text{ for $0 \leq p \leq P(d)$}
$$
\end{problem}

One can also consider the effective asymptotic vanishing problem for the syzygies of products of projective spaces.

\begin{problem}\label{prob:prodprojsp}
Let  $k \geq 1$ be an integer, $n_1, \ldots, n_k, d_1, \ldots, d_k$ be positive integers, and $b_1, \ldots, b_k$ be integers. Set
$$
\begin{array}{l}
X:=\nP^{n_1} \times \cdots \times \nP^{n_k}, ~B:=\sO_{\nP^{n_1}}(b_1) \boxtimes \cdots \boxtimes \sO_{\nP^{n_k}}(b_k),~L:=\sO_{\nP^{n_1}}(d_1) \boxtimes \cdots \boxtimes \sO_{\nP^{n_k}}(d_k),
\end{array}
$$
For each $2 \leq q \leq n_1 + \cdots +n_k+1$, find a constant $c>0$ in terms of $b_1, \ldots, b_k, d_1, \ldots, d_k$, and $q$ such that if $d_1, \ldots, d_k$ are sufficiently large, then
\begin{equation}\label{eq:probprodprojsp}
\text{$K_{p,q}(X, B;L) = 0$ for $0 \leq p \leq c$ and $K_{c+1, q}(X, B;L) \neq 0$.}
\end{equation}
\end{problem}

In Theorem \ref{thm:asyvanprodprojsp}, we prove that if $d+b \geq 0$, then
$$
K_{p,q}(X, B;L) = 0~~\text{ for $0 \leq p \leq (1/n_1! \cdots n_k!)(d^{q-1}+bd^{q-2})$},
$$
where $b:=\min\{b_1, \ldots, b_k\}, ~d:=\min\{d_1, \ldots, d_k\}$.  By a small modification of the proof, we can improve the bound on $p$, but our method only gives a bound on $p$ depending on dimensions $n_1, \ldots, n_k$. It is expected that the constant $c$ in Problem \ref{prob:prodprojsp} is independent of $n_1, \ldots, n_k$. It would be also interesting to find a constant $d>0$ in terms of $b_1, \ldots, b_k, n_1, \ldots, n_k$, and $q$ such that if $d_1, \ldots, d_k \geq d$, then (\ref{eq:probprodprojsp}) holds. When $k=1$, there is a precise prediction on $c$ and $d$ (see \cite[Conjecture 7.5]{EL2}, \cite[Conjecture 2.3]{EL4}).

\begin{conjecture}[Ein--Lazarsfeld]\label{conj:ELvan}
Fix $n \geq 1$, $b \geq 0$, and $0 \leq q \leq n$. If $d \geq b+q+1$, then
$$
K_{p,q}(\nP^n, \sO_{\nP^n}(b); \sO_{\nP^n}(d)) = 0~~\text{ for }~0 \leq p \leq {d+q \choose q} - {d - b- 1 \choose q} - q-1.
$$
\end{conjecture}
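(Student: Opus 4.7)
The plan is to sharpen the induction scheme of Theorem \ref{thm:asyvanprodprojsp} so as to recover the $n$-independent bound $C_{d,b,q} := \binom{d+q}{q} - \binom{d-b-1}{q} - q - 1$ when $X = \nP^n$. For $q \geq 2$, Proposition \ref{prop:koszulcoh} reduces the conjecture to showing
$$H^{q-1}(\nP^n, \wedge^{p+q-1} M_{\sO_{\nP^n}(d)} \otimes \sO_{\nP^n}(b+d)) = 0 \quad \text{for } 0 \leq p \leq C_{d,b,q}.$$
I would proceed by double induction on $(n,q)$. The case $q = 1$ should be tractable by classical Koszul-cohomology methods. A natural base in $n$ is $n = q$, where Serre duality identifies $K_{p,q}(\nP^q, \sO(b); \sO(d)) \cong K_{r-q-p, 1}(\nP^q, \sO(-b-q-1); \sO(d))^*$, reducing the problem to weight-one vanishing on $\nP^q$, which is precisely the range where $C_{d,b,q}$ is expected to be sharp.

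For the inductive step, apply Lemma \ref{lem:tautobundvan} to transfer the vanishing to $\nP^{n-1} \times \nP^1$ through $\sigma$, invoke the short exact sequence (\ref{eq:mainses}), and use Lemma \ref{lem:filt} to filter $\wedge^{p+q-1} \sigma^* M_{\sO_{\nP^n}(d)}$. After K\"unneth, for each $0 \leq i \leq p+q-1$ the required vanishing splits into a piece
$$H^{q-1}(\nP^{n-1}, \wedge^i M_{\sO_{\nP^{n-1}}(d)}(b+d)) \otimes H^0(\nP^1, \sO(a_i))$$
and a piece
$$H^{q-2}(\nP^{n-1}, \wedge^i M_{\sO_{\nP^{n-1}}(d)}(b+d)) \otimes H^1(\nP^1, \sO(a_i)),$$
where $a_i = i(d+n) + b + d + n - 1 - (p+q-1)n$. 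The first piece is controlled by the induction on $n$ at level $q$, and crucially, because $C_{d,b,q}$ is independent of $n$, no factor is lost at this step. The second piece would be controlled by the induction on $q$ applied to $\nP^{n-1}$ at level $q-1$.

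The main obstacle is exactly this second K\"unneth contribution. A direct calculation shows that the range of indices $i$ for which $a_i \leq -2$ (so $H^1(\nP^1, \sO(a_i)) \neq 0$) generally extends beyond $i \leq C_{d,b,q-1} + q - 2$, the range where the inductive hypothesis at level $q-1$ is available. Even small numerical tests, for instance $q = 2, b = 0, d = 3$ (where $C_{d,b,q} = 6$ while $C_{d,b,q-1} = 0$), reveal a genuine gap that cannot be closed by sharpening the K\"unneth bookkeeping alone. To bridge it one would need either a finer filtration of $\wedge^{p+q-1} \sigma^* M_{\sO_{\nP^n}(d)}$ exhibiting cancellation between successive graded pieces, or a BGG/Schur-functor decomposition of $\wedge^i M_{\sO_{\nP^n}(d)}$ in terms of the tangent and cotangent bundles of $\nP^n$ that bypasses the naive K\"unneth step. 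Such ingredients appear to require genuinely new input beyond the toolkit of the present paper, consistent with the conjecture remaining open.
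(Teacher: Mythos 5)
The statement you were asked about is not a theorem of the paper: it is Conjecture 4.3 (Ein--Lazarsfeld), stated in the Open Problems section, and the paper offers no proof of it. Indeed, the paper explicitly remarks that its method only yields bounds on $p$ depending on the dimensions $n_1,\ldots,n_k$ (the factor $1/n_1!\cdots n_k!$ in Theorem \ref{thm:asyvanprodprojsp}), that the constant in the conjecture is expected to be dimension-independent, and that ``a new idea might be needed.'' So there is nothing in the paper to compare your argument against, and your proposal --- which honestly stops short of claiming a proof --- is in the right position: it is an analysis of why the paper's technique does not reach the conjectural bound, and that analysis agrees with the paper's own assessment. Your bookkeeping is accurate: the reduction via Proposition \ref{prop:koszulcoh}, the transfer to $\nP^{n-1}\times\nP^1$ via Lemma \ref{lem:tautobundvan}, the filtration from (\ref{eq:mainses}), and the formula $a_i=i(d+n)+b+d+n-1-(p+q-1)n$ all match the paper's proof of Theorem \ref{thm:asyvanprodprojsp}, and you correctly locate the obstruction in the second K\"unneth piece: requiring $a_i\geq -1$ already at small $i$ forces $p\lesssim (d+b)/n$ when $q=2$, far below the conjectural $\binom{d+2}{2}-\binom{d-b-1}{2}-3$ (e.g. $3d-3$ for $b=0$), and this loss of a factor of $n$ compounds through the induction.

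Two caveats on the parts of your sketch that read as if they might be salvageable. First, for $q=2$ the ``induction on $q$ at level $q-1=1$'' is not available at all through Proposition \ref{prop:koszulcoh} (which requires $q\geq 2$), so the second K\"unneth piece must be killed purely by $H^1(\nP^1,\sO_{\nP^1}(a_i))=0$; this is exactly why the paper's bound is linear in $d$ with a $1/n$ factor there, and no refinement of the same filtration fixes it. Second, your proposed base case $n=q$ is itself unestablished: the duality step needs care (it is stated in the paper for $B$ a vector bundle with vanishing intermediate cohomology, which is fine for $\sO(b)$ on $\nP^q$, but it converts the problem into weight-one vanishing for the \emph{negative} twist $\sO_{\nP^q}(-b-q-1)$ in a sharp range, which is not a known result and is essentially another face of the same open problem). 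So the proposed double induction has neither a proven base nor a closable inductive step; your conclusion that genuinely new input is required is the correct one, and it is the paper's conclusion as well. Note also that the companion nonvanishing result of Ein--Erman--Lazarsfeld quoted after the conjecture shows the conjectural range would be sharp, so any correct proof must be tight, which rules out arguments that discard constants the way the K\"unneth/filtration estimates do.
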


Notice that the conjecture gives the precise vanishing range because Ein--Erman--Lazarsfeld \cite[Theorem 2.1]{EEL2} (see also  \cite[Theorem 2.1]{EL4}) proved that $K_{p,q}(\nP^n, \sO_{\nP^n}(b); \sO_{\nP^n}(d)) \neq 0$ for all 
$$
{d+q \choose q} - {d - b- 1 \choose q} - q
 \leq p \leq {d+n \choose n} + {d+n-q \choose n-q} - {n+b \choose q+b} - q -1,
 $$
 
\medskip 
 
In \cite{OP}, Ottaviani--Paoletti conjectured that if $n \geq 3, d \geq 3$, then $\sO_{\nP^n}(d)$ satisfies the property $N_{3d-3}$. They also consider the cases that $n \leq 2$ or $d \leq 2$, but these cases are already settled. By \cite[Theorem 1.6]{OP}, the property $N_{3d-3}$ for $\sO_{\nP^n}(d)$ is implied by that $K_{p,2}(\nP^n, \sO_{\nP^n}(d)) = 0$ for $0 \leq p \leq 3d-3$. Thus Conjecture \ref{conj:ELvan} for $b=0$ and $q=2$ is equivalent to Ottaviani--Paoletti's conjecture. At this moment, we only know that $\sO_{\nP^n}(d)$ satisfies the property $N_{d+1}$ by Bruns--Conca--R\"{o}mer \cite{BCR1}, and a small change of the proof of Theorem \ref{thm:asyvanprodprojsp} yields that $\sO_{\nP^3}(d)$ satisfies the property $N_{d+2}$. A new idea might be needed to solve Conjecture \ref{conj:ELvan}.

\medskip

It is also a fascinating problem to study the asymptotic behavior of the \emph{Betti numbers}
$$
k_{p,q}(X, B;L_d) := \dim K_{p,q}(X, B; L_d)
$$ 
when $d$ is sufficiently large (see \cite[Problem 7.3]{EL2}). In this direction, Ein--Erman--Lazarsfeld conjectured that for each $1 \leq q \leq n$, the Betti numbers $k_{p,q}(X, L_d)$ converge to a normal distribution (see \cite[Conjecture B]{EEL1}, \cite[Conjecture 3.2]{EL4}). This normal distribution conjecture has not been verified even for $\nP^2$ and $\nP^1 \times \nP^1$, and it seems that the conjecture is already very challenging for Veronese embeddings (cf. \cite{BCEGLY, BEGY}).

\begin{conjecture}[Ein--Erman--Lazarsfeld]\label{conj:normdist}
Fix $n \geq 1$ and $1 \leq q \leq n$. Then there is a normalizing function $F_q(d)$ such that
$$
F_q(d) \cdot k_{p_d, q}(\nP^n, \sO_{\nP^n}(d)) \longrightarrow e^{-a^2/2}
$$
as $d \to \infty$ and $p_d \to r_d/2 + a\sqrt{r_d}/2$.
\end{conjecture}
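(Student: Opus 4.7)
The Ein--Erman--Lazarsfeld normal distribution conjecture is genuinely open, and as the author stresses, has not been verified even in the surface cases $\nP^2$ or $\nP^1 \times \nP^1$. I will not pretend to have a proof, but will sketch the direction that seems to me the most natural place to begin, together with the obstacles one meets. The one case where the conjecture is essentially known, namely $n=q=1$, reduces via the Eagon--Northcott resolution of the rational normal curve to the classical de Moivre--Laplace central limit theorem for binomial coefficients; any proposed approach should at minimum degenerate to this in the base case.

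The starting point is the $\GL_{n+1}$-equivariant structure: each $K_{p,q}(\nP^n,\sO_{\nP^n}(d))$ decomposes as a sum of Schur functors $\mathbb{S}_\lambda \Bbbk^{n+1}$ indexed by partitions $\lambda$ with at most $n+1$ parts and $|\lambda|=(p+q)d$. The BGG resolution applied on the Veronese cone, or equivalently Weyman's geometric technique, expresses the multiplicity $m_\lambda(p,q,d)$ as an alternating sum of Littlewood--Richardson-type coefficients. Multiplying by the Weyl dimension formula and summing over $\lambda$ produces a generating function
$$
\Phi_{d,q,n}(x) \;=\; \sum_{p \geq 0} k_{p,q}(\nP^n,\sO_{\nP^n}(d))\, x^p,
$$
which should admit a representation as a contour integral or a specialization of a plethystic generating function of classical combinatorial objects.

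The plan is then to apply a saddle-point analysis to $\Phi_{d,q,n}$ as $d \to \infty$. One expects the coefficients to peak near $p=r_d/2$ at a value of order $\binom{r_d}{r_d/2}$ divided by a polynomial correction in $d$ encoding the cohomological weight $q$; rescaling $p=r_d/2 + a\sqrt{r_d}/2$ should then isolate a Gaussian envelope, with $F_q(d)$ emerging as the reciprocal of the peak value. A complementary route runs through Boij--S\"oderberg theory: the Betti table of the $d$-th Veronese is a nonnegative combination of pure diagrams, and one may hope that in the limit the extremal rays and their coefficients conspire --- by an abstract central-limit mechanism for sums of many diagrams --- to yield a Gaussian profile.

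The main obstacle, and the reason the conjecture has resisted all prior attempts, is that for $q \geq 2$ there is no manifest positivity to exploit: the multiplicities $m_\lambda$ arise as alternating sums, so neither saddle-point nor moment methods can proceed without delicate a priori cancellation estimates. Even pinning down the asymptotic variance --- let alone controlling all higher moments simultaneously --- appears out of reach with current techniques, and would likely demand either a proof of log-concavity of the sequence $\bigl(k_{p,q}(\nP^n,\sO_{\nP^n}(d))\bigr)_p$ (itself a conjecture of independent interest) or a genuinely new probabilistic model for syzygies. The above is best read as a research program rather than a proof sketch.
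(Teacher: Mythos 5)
The statement you were asked about is not proved anywhere in the paper: it is quoted in Section \ref{sec:openprob} as an open conjecture of Ein--Erman--Lazarsfeld, and the author explicitly notes that it has not been verified even for $\nP^2$ and $\nP^1 \times \nP^1$. So there is no paper proof to compare your text against, and your decision not to claim a proof is the correct one; what you have written is a research program, not an argument, and it should be presented as such. Within that frame your remarks are sound: the base case $n=q=1$ does reduce, via the Eagon--Northcott resolution of the rational normal curve and the resulting binomial formula for $k_{p,1}$, to de Moivre--Laplace; and the obstruction you single out for $q \geq 2$ --- that the equivariant multiplicities arise as alternating sums with no manifest positivity, so that neither saddle-point nor moment methods can be run without cancellation estimates --- is precisely why the conjecture has resisted attack.

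One concrete addition worth noting: the paper itself suggests a handle different from either of your two routes, namely the identity
$$
k_{p, q}(\nP^n, \sO_{\nP^n}(d)) = \frac{1}{n} \cdot h^{q}\big(\nP^{n-1} \times \nP^1,  \wedge^{p+q}\sigma^* M_{\sO_{\nP^n}(d)} \otimes (\sO_{\nP^{n-1}} \boxtimes \sO_{\nP^1}(n-1)) \big),
$$
which comes from the finite map $\sigma \colon \nP^{n-1}\times\nP^1 \to \nP^n$ and the splitting of the tautological bundle used in the proof of the asymptotic vanishing theorem. This expresses the Betti numbers (not merely their vanishing) as cohomology on a product with a $\nP^1$ factor, where the short exact sequence (\ref{eq:mainses}) and induction on $n$ are available; it is complementary to the $\GL_{n+1}$-equivariant/saddle-point analysis and the Boij--S\"oderberg decomposition you propose, and might be a more tractable starting point for asymptotics. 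But none of these routes has been carried out, either in your proposal or in the paper, so the statement remains exactly what it is labeled: an open conjecture.
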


Notice that 
$$
k_{p, q}(\nP^n, \sO_{\nP^n}(d)) = \frac{1}{n} \cdot h^{q}\big(\nP^{n-1} \times \nP^1,  \wedge^{p+q}\sigma^* M_{\sO_{\nP^n}(d)} \otimes (\sO_{\nP^{n-1}} \boxtimes \sO_{\nP^1}(n-1)) \big)
$$
for $p \geq 0$ and $1 \leq q \leq n$. It is tempting to wonder if there is a clever way to compute this Betti number.

\medskip

We refer to \cite{BEGY} and \cite{EL2, EL4} for more problems and conjectures on syzygies of Veronese embeddings and asymptotic syzygies of algebraic varieties, respectively.

\bibliographystyle{ams}

\end{document}